\renewcommand{\baselinestretch}{\baselinestretch}
\renewcommand{\baselinestretch}{1.1}
\numberwithin{equation}{section}
\newtheorem{thm}{Theorem}[section]
\newtheorem{lem}[thm]{Lemma}
\newtheorem{cor}[thm]{Corollary}
\newtheorem{prop}[thm]{Proposition}
\newtheorem{rmk}[thm]{Remark}
\newcommand{\z}{{\mathbb Z}}
\newcommand{\q}{{\mathbb Q}}
\newcommand{\bx}{\bm x}
\newcommand{\by}{\bm y}
\newcommand{\bz}{\bm z}
\newcommand{\bv}{\bm v}
\newcommand{\bu}{\bm u}
\newcommand{\bw}{\bm w}
\newcommand{\lglue}{\bm[\!\![}
\newcommand{\rglue}{]\!\!\bm]}
\newcommand{\cls}{\textnormal{cls}}
\newcommand{\oline}{\overline}
\newcommand{\fs}{\mathfrak s}
\begin{document}

\title{On the exceptional sets of integral quadratic forms}

\author{Wai Kiu Chan}
\address{Department of Mathematics and Computer Science, Wesleyan University, Middletown CT, 06459, USA}
\email{wkchan@wesleyan.edu}

\author{Byeong-Kweon Oh}
\address{Department of Mathematical Sciences and Research Institute of Mathematics,
Seoul National University,
 Seoul 151-747, Korea}
\email{bkoh@snu.ac.kr}
\thanks{This work of the second author was supported by the National Research Foundation of Korea (NRF-2019R1A2C1086347).}

\subjclass[2010]{11E12, 11E25}

\keywords{Integral quadratic forms, exceptional sets, additively indecomposable lattices, root lattices}

\begin{abstract}
A collection $\mathcal S$ of equivalence classes of positive definite integral quadratic forms in $n$ variables is called an $n$-exceptional set if there exists a positive definite integral quadratic form which represents all equivalence classes of positive definite integral quadratic forms in $n$ variables except those in $\mathcal S$.  We show that, among other results, for any given positive integers $m$ and $n$, there is always an $n$-exceptional set of size $m$ and there are only finitely many of them.
\end{abstract}

\maketitle

\section{Introduction}

An integral quadratic form $f(\bx)$ in the $m$ variables $\bx = (x_1, \ldots, x_m)$ is said to represent another integral quadratic form $g(\by)$ in the $n$ variables $\by = (y_1, \ldots, y_n)$ if there exists an $n\times m$ integer matrix $T$ such that
$$f(\by T) = g(\by).$$
One of the fundamental questions in the arithmetic theory of quadratic forms is the representation problem which asks for an effective  determination of the set of $n$-ary quadratic forms that are represented by a given integral quadratic form $f(\bx)$.   This is, of course, the same as deciding which $n$-ary integral quadratic forms that {\em are not} represented by $f(\bx)$.  However, this change of perspective does lead to some interesting problems that have not been investigated before.  For example, can every collection of $n$-ary integral quadratic forms be the exceptional sets of some integral quadratic forms?  Are there only finitely many these exceptional sets of a fixed size?  Can we exhibit such an exceptional set of a given size?  We will answer some of these interesting questions in this paper.

The subsequent  discussion will be conducted in the language of quadratic spaces and lattices.  The readers are referred to \cite{ki} and \cite{om} for any unexplained notations and terminologies.    For simplicity, the quadratic map and its associated bilinear form on any quadratic space will be denoted by $Q$ and $B$, respectively.  The term {\em lattice} always means a finitely generated $\z$-module on a finite dimensional positive definite quadratic space over $\q$.  The scale of a lattice $L$, denoted $\fs(L)$, is the ideal generated by $\{B(\bx, \by): \bx, \by \in L\}$ in $\z$.  We call $L$ an integral lattice if $\fs(L) \subseteq \z$.  The (isometry) class containing $L$ is denoted by $\text{cls}(L)$.

A lattice $M$ is said to be represented by another lattice $L$ if there is a linear map $\sigma: M \longrightarrow L$ such that $Q(\sigma(\bx)) = Q(\bx)$ for all $\bx \in M$.  Such a map is called a representation from $M$ into $L$, which is necessarily injective because $M$ is assumed to be nondegenerate.   Two lattices $L$ and $M$ are isometric if there exists an isometry sending $L$ into $M$.  In this case we will write $L \cong M$.  If $L$ is a lattice and $A$ is one of its Gram matrix, we will write $L \cong A$.   We will often address a positive definite symmetric matrix as a lattice.  The diagonal matrix with entries $a_1, \ldots, a_n$ on its main diagonal will be denoted by $\langle a_1, \ldots, a_n\rangle$.  If $L$ and $M$ are lattices, their orthogonal sum is denoted by $L \perp M$.

For any positive integer $n$, let $\Phi_n$ be the set of classes of integral lattices of rank $n$.  For any integral lattice $L$ of rank $\geq n$,  its {\em $n$-exceptional set}, denoted $\mathcal E_n(L)$, is the set of classes of lattices in $\Phi_n$ not represented by $L$.   A set of classes of lattices of rank $n$ is called an $n$-exceptional set if it is the $n$-exceptional set of some integral lattice.

An $n$-universal lattice is an integral lattice with an empty $n$-exceptional set.  In other words, an $n$-universal lattice is an integral lattice which represents all integral lattices of rank $n$.  More generally, let $\mathcal S$ be a set of classes of integral lattices whose ranks are bounded above by a prescribed constant.  A lattice $L$ is called {\it $\mathcal S$-universal} if it represents all classes in $\mathcal S$.  It was proved in \cite{kko} that there exists a {\it finite} subset $\mathcal S^0 \subset \mathcal S$ such that any $\mathcal S^0$-universal lattice is $\mathcal S$-universal.  Following \cite{lkk}, we call such a finite subset $\mathcal S^0$ an $\mathcal S$-universality criterion set.   An $\mathcal S$-universality criterion set $\mathcal S^0$ is called {\it minimal} if no proper subset of $\mathcal S^0$ is an $\mathcal S$-universality criterion set.  An explicit minimal $\Phi_n$-universality criterion set is known for $n=1,2$ and $8$; see \cite{b}, \cite{kko0}, and \cite{o0}, respectively.   Note that if $\mathcal X$ is a $\Phi_n$-universality criterion set, then $\mathcal E \cap \mathcal X \ne \emptyset$ for any nonempty $n$-exceptional set $\mathcal E$.

Since the classes of rank-1 integral lattices are in one-to-one correspondence with the  positive integers, we will list the elements in an 1-exceptional set or $\Phi_1$-universality criterion set as integers.  For example,  by the 15-Theorem of Conway-Schneeberger \cite{b, c}$, \{1,2,3,5,6,7,10,14,15\}$ is a minimal $\Phi_1$-universality criterion set.

Among all the results obtained in this paper the following two stand out and worth mentioning in this introduction.  For any given positive integers $m$ and $n$:
\begin{itemize}

\item There are only finitely many $n$-exceptional sets of size $m$ (Theorem \ref{finite}).

\item There exists at least one $n$-exceptional set of size $m$ (Theorem \ref{casemn}).
\end{itemize}
They will be proved in Section 2 and Section 5, respectively.  Their proofs rely heavily on root lattices, additively indecomposable lattices, and the lattices which represent them.  All of these will be discussed and analyzed in detail in Section 3 and Section 4.

\section{Finite exceptional sets of integral lattices}


We first present a few results on $n$-exceptional sets of size 1 as a warm-up for the general case.

\begin{prop} \label{1-exceptional}
Let $N$ be an integral lattice of rank $n$. Then $\{\cls(N)\}$ is an $n$-exceptional set if and only if each $\Phi_n$-universality criterion set contains $\cls(N)$.
\end{prop}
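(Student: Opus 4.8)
The plan is to prove the two implications separately, treating the ``only if'' direction as a quick consequence of the observation recorded just before the proposition and reserving the real work for the ``if'' direction.

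For the forward implication I would assume $\{\cls(N)\}$ is an $n$-exceptional set and take an arbitrary $\Phi_n$-universality criterion set $\mathcal X$. Since $\{\cls(N)\}$ is a nonempty $n$-exceptional set, the remark preceding the proposition yields $\{\cls(N)\} \cap \mathcal X \neq \emptyset$, and because the left-hand set is a singleton this immediately forces $\cls(N) \in \mathcal X$. (The one-line reason behind that remark: if a $\Phi_n$-universality criterion set $\mathcal X$ were disjoint from $\mathcal E = \mathcal E_n(L)$, then $L$ would represent every class of $\mathcal X$, making $L$ first $\mathcal X$-universal and then $\Phi_n$-universal, contradicting $\mathcal E \neq \emptyset$.) That is the entire argument in this direction.

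For the reverse implication I would assume every $\Phi_n$-universality criterion set contains $\cls(N)$ and put $\mathcal S = \Phi_n \setminus \{\cls(N)\}$. Invoking the finiteness theorem of \cite{kko}, I would fix a finite $\mathcal S$-universality criterion set $\mathcal S^0 \subseteq \mathcal S$, so that in particular $\cls(N) \notin \mathcal S^0$. By the standing hypothesis, a finite subset of $\Phi_n$ omitting $\cls(N)$ cannot be a $\Phi_n$-universality criterion set, so $\mathcal S^0$ is not one. Unwinding the definition, this failure means precisely that there exists a lattice $L$ which is $\mathcal S^0$-universal but not $\Phi_n$-universal.

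The last step is to pin down $\mathcal E_n(L)$ by playing the two properties of $L$ against each other. Since $L$ is $\mathcal S^0$-universal, the defining property of the criterion set makes it $\mathcal S$-universal, so $L$ represents every class of $\Phi_n$ except possibly $\cls(N)$; that is, $\mathcal E_n(L) \subseteq \{\cls(N)\}$. Since $L$ is not $\Phi_n$-universal, $\mathcal E_n(L) \neq \emptyset$. Together these give $\mathcal E_n(L) = \{\cls(N)\}$, exhibiting $\{\cls(N)\}$ as an $n$-exceptional set. I expect the only genuinely non-mechanical point to be the recognition that an $\mathcal S$-universality criterion set for $\mathcal S = \Phi_n \setminus \{\cls(N)\}$ is a near-miss for a $\Phi_n$-universality criterion set, so that its guaranteed failure supplies exactly a lattice whose sole exceptional class is $\cls(N)$; once that is seen, no computation with explicit lattices is required.
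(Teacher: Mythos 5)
Your proposal is correct and follows essentially the same route as the paper: the forward direction is the disjointness observation about criterion sets and nonempty exceptional sets, and the reverse direction extracts a lattice witnessing $\mathcal E_n(L)=\{\cls(N)\}$ from the failure of a $\Phi_n\setminus\{\cls(N)\}$-universality criterion set to be a $\Phi_n$-universality criterion set. The only difference is cosmetic: you phrase the final step as the two containments $\mathcal E_n(L)\subseteq\{\cls(N)\}$ and $\mathcal E_n(L)\neq\emptyset$, where the paper states the conclusion directly.
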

\begin{proof}  First, let us assume that $\{\cls(N)\} = \mathcal E_n(L)$ for some integral lattice $L$.  If a $\Phi_n$-universality criterion set $\mathcal X$ does not contain $\cls(N)$, then $L$ represents all classes in $\mathcal X$ and hence is $n$-universal.  This is a contradiction.

Conversely, suppose that every $\Phi_n$-universality criterion set contains the class $\cls(N)$. Let  $\mathcal U$ be a $\Phi_n\setminus\{\cls(N)\}$-universality criterion set.  If every lattice that represents all classes in $\mathcal U$ also represents $\cls(N)$, then $\mathcal U$ itself is a $\Phi_n$-universality criterion set.  This is absurd since $\cls(N) \not \in \mathcal U$.  Therefore, there must be an integral lattice $L$ which represents all classes in $\mathcal U$, but $L$ does not represent  $N$.  This implies that $\mathcal E_n(L) = \{\cls(N)\}$.
\end{proof}

\begin{rmk}
Since every $\Phi_n$-universality criterion set must contain a minimal $\Phi_n$-universality criterion set, Proposition \ref{1-exceptional} can be restated as saying that $\{\cls(N)\}$ is an $n$-exceptional set if and only if every  minimal $\Phi_n$-universality criterion set contains $\cls(N)$.
\end{rmk}

The set $\Phi_1^0: = \{1,2,3,5,6,7,10,14,15\}$ is a $\Phi_1$-universality criterion set by the $15$-Theorem of Conway-Schneeberger \cite{b, c}.  But it is also confirmed in \cite[page 31]{b} that for each of the nine integers in $\Phi_1^0$ there is a (quaternary) integral lattice with that integer as its only exception.   Thus, $\Phi_1^0$ is the unique minimal $\Phi_1$-universality criterion set and the number of 1-exceptional sets of size 1 is 9.  Recently  Barowsky et al \cite{bd} proved that the number of 1-exceptional sets of size 2 is exactly 73.

An integral lattice $N$ is called {\it maximal} (with respect to the scale) if $N\subseteq N'$ for some integral lattice $N' \subseteq \q N$, then $N = N'$.  Any integral lattice with a square-free discriminant is maximal.  Note that if $\{\cls(N)\}$ is an $n$-exceptional set, then $N$ has to be maximal.

An  integral lattice $N$ is called {\it additively indecomposable} if for any representation $\sigma : N \longrightarrow  M_1\perp M_2$, where $M_1$ and $M_2$ are integral lattices, either $\sigma(N)\subseteq M_1$ or $\sigma(N) \subseteq M_2$.  The readers are referred to \cite{p} for some properties of additively indecomposable lattices.  Since unimodular sublattices of an integral lattice must be an orthogonal summand,  any indecomposable unimodular lattice is additively indecomposable.  Bannai \cite{bannai} proved that for every sufficiently large $n$ there must be an indecomposable unimodular lattice of rank $n$.  Therefore, theoretically additively indecomposable integral lattices exist in all sufficiently large dimensions, though in practice these lattices are difficult to find.   For a list of explicit examples of additively indecomposable integral lattices of rank  $n \leq 35$, see \cite{p}.  In Section \ref{additive}, we will exhibit an infinite family of additively indecomposable integral lattices of discriminant 2 (hence maximal) and rank $4(k + 3)$ for every $k \geq 0$.

\begin{lem}
If $N$ is an additively indecomposable maximal integral lattice of rank $n$, then $\{\cls(N)\}$ is an $n$-exceptional set.
\end{lem}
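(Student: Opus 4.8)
The plan is to invoke Proposition \ref{1-exceptional}, which reduces the statement to showing that every $\Phi_n$-universality criterion set contains $\cls(N)$. I will prove the contrapositive: if $\mathcal X$ is a finite subset of $\Phi_n$ with $\cls(N) \notin \mathcal X$, then $\mathcal X$ is \emph{not} a $\Phi_n$-universality criterion set. The empty set is trivially not such a criterion set, since lattices of rank $<n$ are never $n$-universal, so I may assume that $\mathcal X$ is nonempty.

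First I would produce an explicit $\mathcal X$-universal lattice that witnesses the failure. Writing $\mathcal X = \{\cls(M_1), \ldots, \cls(M_r)\}$, set
\[
L = M_1 \perp \cdots \perp M_r.
\]
Then $L$ is integral, and each $M_i$ embeds isometrically into $L$ as its $i$-th orthogonal summand, so $L$ represents every class in $\mathcal X$; that is, $L$ is $\mathcal X$-universal. The whole argument then comes down to the following claim: $L$ does not represent $N$. Granting the claim, $L$ is $\mathcal X$-universal but not $n$-universal (it fails to represent the rank-$n$ lattice $N$), and hence $\mathcal X$ cannot be a $\Phi_n$-universality criterion set, which is exactly what I want.

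To prove the claim, suppose for contradiction that $\sigma : N \longrightarrow L$ is a representation. Since each $M_i$ and each partial sum $M_{i+1} \perp \cdots \perp M_r$ is again an integral lattice, I can apply the defining property of additive indecomposability to the splitting $L = M_1 \perp (M_2 \perp \cdots \perp M_r)$ and then induct on $r$; this forces $\sigma(N) \subseteq M_i$ for a single index $i$. Now $\sigma(N)$ has rank $n$ because $\sigma$ is injective, while $\rank M_i = n$ as well, so $\sigma(N)$ is a full-rank sublattice of $M_i$, giving $\sigma(N) \subseteq M_i \subseteq \q\,\sigma(N)$.

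Here is where maximality does the work: $\sigma(N)$ is isometric to $N$ and hence maximal, and $M_i$ is an integral lattice on $\q\,\sigma(N)$ containing $\sigma(N)$, so maximality yields $\sigma(N) = M_i$. Thus $N \cong M_i$ and $\cls(N) = \cls(M_i) \in \mathcal X$, contradicting $\cls(N) \notin \mathcal X$. This proves the claim and completes the proof. The only point requiring care is this last passage: one must ensure the intermediate orthogonal summands are integral so that additive indecomposability applies, and then correctly promote the resulting full-rank embedding into a single summand to a genuine isometry via maximality. I expect this combination of the two hypotheses—rather than any single computation—to be the crux of the argument.
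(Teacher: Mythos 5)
Your proof is correct and rests on exactly the same mechanism as the paper's: form the orthogonal sum of the lattices in a finite set avoiding $\cls(N)$, use additive indecomposability to force the image of $N$ into a single rank-$n$ summand, and use maximality to upgrade that full-rank embedding to an isometry, yielding a contradiction. The only difference is cosmetic routing --- you pass through Proposition \ref{1-exceptional}, whereas the paper applies the argument directly to a $\Phi_n\setminus\{\cls(N)\}$-universality criterion set to exhibit the witness lattice $L$ with $\mathcal E_n(L)=\{\cls(N)\}$ in one step.
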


\begin{proof}  Let $\{\cls(K_1),\ldots, \cls(K_\ell)\}$ be a $\Phi_n\setminus \{\cls(N)\}$-universality criterion set.  Then the lattice $L = K_1\perp \cdots \perp K_\ell$ represents all classes in $\Phi_n\setminus \{\cls(N)\}$.  Suppose that $N$ is represented by $L$.  Since $N$ is additively indecomposable, $N$ is represented by $K_i$ for some $i$ which means that $N \cong K_i$ since $N$ is also assumed to be maximal. This contradicts that $\cls(N) \not \in  \{\cls(K_1),\ldots, \cls(K_\ell)\}$.
\end{proof}

In the oppositive direction, additively indecomposable lattices of rank $n$ can also be used to construct infinitely many maximal integral lattices $N$ such that each $\{\cls(N)\}$ is not an  $(n+1)$-exceptional set.  It is a consequence of the following lemma which is a simple observation relating the study of finite exceptional sets to the investigation of universal lattices.

\begin{lem} \label{finite}
If $n > 1$ and $\mathcal E_n(L)$ is finite, then $L$ is $(n-1)$-universal.
\end{lem}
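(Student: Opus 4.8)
The plan is to prove the contrapositive: assuming $L$ is not $(n-1)$-universal, I would produce infinitely many distinct classes lying in $\mathcal E_n(L)$. Since $\mathcal E_n(L)$ is only defined when $\rank L \geq n \geq 2$, the lattice $L$ certainly has rank at least $n-1 \geq 1$, so the notion is not vacuous. Suppose then that $L$ fails to represent some integral lattice $M$ of rank $n-1$.

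The main idea is simply to pad $M$ with a one-dimensional summand. For each positive integer $a$, form the rank-$n$ integral lattice $N_a := M \perp \langle a \rangle$. The key observation is that $L$ cannot represent any $N_a$: the sublattice $M$ sits inside $N_a$ as an orthogonal summand, so any representation $\sigma : N_a \longrightarrow L$ would restrict to a representation of $M$ into $L$, contradicting the choice of $M$. Hence $\cls(N_a) \in \mathcal E_n(L)$ for every $a \geq 1$.

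It then remains to see that the classes $\cls(N_a)$ are infinitely many distinct ones, and this is the only point requiring any care. Since the discriminant is an isometry invariant and the discriminant of $N_a$ equals $a$ times the discriminant of $M$, it is unbounded as $a$ ranges over the positive integers; consequently the lattices $N_a$ fall into infinitely many isometry classes. Therefore $\mathcal E_n(L)$ is infinite, contradicting the hypothesis, and so $L$ must be $(n-1)$-universal.

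I do not anticipate a genuine obstacle here: the argument is essentially immediate once one thinks of extending the unrepresented $M$ by an orthogonal $\langle a \rangle$, and the unboundedness of the discriminant supplies the required infinitude of distinct classes with no further work.
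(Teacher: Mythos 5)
Your proposal is correct and is essentially the paper's argument run in the contrapositive direction: both rest on padding a rank-$(n-1)$ lattice with $\langle a\rangle$, noting that a representation of $M\perp\langle a\rangle$ restricts to one of $M$, and that the padded lattices fall into infinitely many classes (which you make explicit via the discriminant, while the paper leaves it implicit). No substantive difference.
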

\begin{proof}
Let $N$ be an integral lattice of rank $n-1$.  Since $\mathcal E_n(L)$ is finite, there must be a positive integer $\ell$ such that the lattice $N\perp \langle \ell \rangle$ is represented by $L$.  Then $L$ represents $N$.
\end{proof}

\begin{cor}
Let $N$ be an additively indecomposable integral lattice of rank $n > 1$.  For any positive integer $a$,  $\{\cls(N \perp \langle a \rangle)\}$ is not an $(n+1)$-exceptional set.
\end{cor}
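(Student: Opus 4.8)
The plan is to argue by contradiction. Suppose that $\mathcal E_{n+1}(L)=\{\cls(N\perp\langle a\rangle)\}$ for some integral lattice $L$, and write $M=N\perp\langle a\rangle$. Since this exceptional set is finite, Lemma~\ref{finite} (applied with $n+1$ in place of $n$, which is legitimate because $n+1>1$) shows that $L$ is $n$-universal; in particular $L$ represents $N$, and it represents $N\perp\langle b\rangle$ for every positive integer $b$ with $\cls(N\perp\langle b\rangle)\neq\cls(M)$. Comparing discriminants shows that $N\perp\langle b\rangle\cong M$ forces $b=a$, so $L$ represents $N\perp\langle b\rangle$ precisely for $b\neq a$. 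My aim is to deduce from this that $L$ must in fact also represent $M$, contradicting $\cls(M)\in\mathcal E_{n+1}(L)$.

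The main device I would use is a correspondence between representations of the lattices $N\perp\langle b\rangle$ and the orthogonal complements of the copies of $N$ inside $L$. Because $L$ is positive definite it contains only finitely many vectors of any given norm, hence only finitely many sublattices $N_1',\dots,N_r'$ isometric to $N$; set $K_i=(N_i')^{\perp}\cap L$. Any representation $\sigma\colon N\perp\langle b\rangle\to L$ restricts to a copy $N_i'=\sigma(N)$ of $N$ together with a vector $\bv=\sigma(\be)$ orthogonal to $N_i'$ with $Q(\bv)=b$, so $\bv\in K_i$; conversely a norm-$b$ vector in some $K_i$ assembles with $N_i'$ into a copy of $N\perp\langle b\rangle$. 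Thus $L$ represents $N\perp\langle b\rangle$ if and only if $b\in\bigcup_{i=1}^{r}Q(K_i)$, and by the previous paragraph this union equals $\z_{>0}\setminus\{a\}$. The remaining task is to show this is impossible, i.e.\ that some $K_i$ represents $a$.

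If $a$ is not square-free this is immediate, and in fact it covers every case in which $M$ is not maximal (cf.\ the remark that an exceptional singleton must be maximal): each $Q(K_i)$ is closed under multiplication by squares (replace $\bv$ by $t\bv$), so the union $\bigcup_iQ(K_i)$ is square-closed, whereas $\z_{>0}\setminus\{a\}$ is not square-closed when $a$ is not square-free. Equivalently, writing $a=m^2a_0$ with $m\geq 2$, the integral lattice $N\perp\langle a_0\rangle$ is a proper overlattice of $M$ of strictly smaller discriminant, hence is represented by $L$ and carries $M$ into $L$; and the same overlattice argument settles every non-maximal $M$. The main obstacle is therefore the maximal case, where $a$ is square-free and $N\perp\langle a\rangle$ has no proper integral overlattice (for instance $E_8\perp\langle a\rangle$). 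Here I would invoke the hypothesis that $N$ is additively indecomposable: together with maximality of $N$, which forces every copy $N_i'$ to be primitive in $L$, it constrains how a copy of $N$ can meet an orthogonal decomposition of $L$, since such a copy cannot be split between two orthogonal summands. The prototype is $N$ unimodular, where each $N_i'$ is an orthogonal direct summand $L=N_i'\perp K_i$ and a short vector has vanishing $N_i'$-component, so a norm-$a$ vector produced by $n$-universality is automatically absorbed into some $K_i$. The hard part will be to carry out this absorption for a general additively indecomposable maximal $N$ and square-free $a$; equivalently, to rule out that finitely many positive definite lattices represent exactly $\z_{>0}\setminus\{a\}$. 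That step is the crux, and it is the one I expect to require the full force of additive indecomposability.
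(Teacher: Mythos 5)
Your setup is sound as far as it goes: the reduction via Lemma~\ref{finite} to $n$-universality, the correspondence between representations of $N\perp\langle b\rangle$ and norm-$b$ vectors in the orthogonal complements $K_i$ of the copies of $N$ in $L$, and the square-closure argument disposing of non-square-free $a$ are all correct. But the case you yourself flag as the crux --- $a$ square-free and $N$ not unimodular --- is not merely unfinished: the statement you propose to reduce it to, namely that finitely many positive definite lattices cannot jointly represent exactly $\z_{>0}\setminus\{a\}$, is false. The paper's own discussion of the $15$-Theorem records that for each $a\in\{1,2,3,5,6,7,10,14,15\}$ there is a single quaternary lattice whose only exception is $a$. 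So no argument at that level of generality can close the gap; you must use the specific structure of the $K_i$.

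The missing idea, which is the entire content of the paper's proof, is to exploit $I_{n+1}$. Since $N$ is additively indecomposable of rank $>1$, $N\perp\langle a\rangle\not\cong I_{n+1}$, so $I_{n+1}$ is represented by $L$ and, being unimodular, splits off: $L\cong I_{n+1}\perp L'$. By $n$-universality $L$ represents $N$, and additive indecomposability forces the image of $N$ into $L'$ (it cannot lie in $I_{n+1}=I_1\perp\cdots\perp I_1$, as iterating indecomposability would put it inside a single $I_1$). Hence one of your complements $K_i$ contains a copy of $I_{n+1}$. Finally, Ko's theorem gives that any additively indecomposable lattice of rank $>1$ has rank at least $6$, so $n+1\geq 7\geq 4$ and $I_{n+1}$ represents every positive integer $a$ by Lagrange's Four-Square Theorem; thus $N\perp\langle a\rangle$ is represented by $L'\perp I_{n+1}=L$, the desired contradiction. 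Note that this argument is uniform in $a$ and needs neither maximality nor your square-free reduction.
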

\begin{proof}
Suppose that there is an integral lattice $L$ such that $\mathcal E_{n+1}(L)=\{\cls(N\perp \langle a \rangle)\}$. Since $I_{n+1} \not \cong N\perp \langle a \rangle$, $I_{n+1}$ is represented by $L$ so that $L\cong I_{n+1}\perp L'$ for some integral lattice $L'$.  By Lemma \ref{finite}, $L$ is $n$-universal and hence it represents $N$.  Since $N$ is additively indecomposable, $N$ must be represented by $L'$.  By an old result of Ko \cite{ko}, any additively indecomposable of rank $> 1$ must have rank at least 6.  Therefore, $n \geq 6$ and $I_{n+1}$ represents $a$ by Lagrange's Four-Square Theorem.  This implies that $N \perp \langle a \rangle$ is represented by $L$, which is a contradiction.
\end{proof}

\begin{thm} \label{finiteness}
For any positive integers $m$ and $n$, there are only finitely many $n$-exceptional sets of size $m$.
\end{thm}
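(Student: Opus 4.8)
The plan is to reduce the statement to a uniform bound on a single invariant. Since, for each fixed rank $n$ and bound $D$, reduction theory produces only finitely many classes of positive definite integral lattices of rank $n$ with discriminant at most $D$, it suffices to find a constant $D = D(m,n)$ such that every class occurring in some $n$-exceptional set of size $m$ has discriminant at most $D$. Granting this, the union $\mathcal C$ of all size-$m$ $n$-exceptional sets lies in a fixed finite set, so every such exceptional set is an $m$-element subset of $\mathcal C$ and there are finitely many of them. I would first record two structural features of a lattice $L$ with $|\mathcal E_n(L)| = m$. First, $\mathcal E_n(L)$ is closed under passing to integral overlattices: if $L$ represents $N'$ it represents every sublattice of $N'$, so each member has only the finitely many classes lying between $N$ and its dual $N^{\#}$ above it; this already localizes the whole content of the theorem to the promised discriminant bound. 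Second, for $n \geq 2$, Lemma \ref{finite} shows that $L$ is $(n-1)$-universal, while the case $n=1$ will be the base case.

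Next I would extract a convenient orthogonal splitting of $L$. Among the $m+1$ pairwise non-isometric lattices $I_{n-1} \perp \langle t \rangle$ with $1 \leq t \leq m+1$ (their discriminants are the distinct integers $t$), at most $m$ can lie in $\mathcal E_n(L)$, so $L$ represents $I_{n-1} \perp \langle t_0 \rangle$ for some $t_0 \leq m+1$. Since $I_{n-1}$ is unimodular, its image is an orthogonal direct summand, and therefore $L \cong I_{n-1} \perp L''$ with $L''$ representing $t_0$. The purpose of this step is that the $n-1$ split unimodular summands lower the codimension needed in the representation theorems invoked below: although an $(n-1)$-universal lattice may have rank as small as $n+2$, a large orthogonal unimodular part makes a rank-$n$ lattice far easier to represent.

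The heart of the argument is a contradiction. Suppose no such $D$ existed; then there are lattices $L_j$ with $|\mathcal E_n(L_j)| = m$ and members $N_j \in \mathcal E_n(L_j)$ whose discriminants tend to infinity, each considered against the decomposition $L_j \cong I_{n-1} \perp L_j''$. One would like to invoke a local--global representation theorem of Hsia--Kitaoka--Kneser type to conclude that $N_j$, being locally represented by $L_j$ everywhere and having large invariants, is in fact represented by $L_j$, contradicting $N_j \in \mathcal E_n(L_j)$. Two complications must be faced: the smallest successive minimum of $N_j$ may stay small even as the discriminant grows (the \emph{skew} lattices, where orthogonal-sum representations are hardest), and the threshold in the representation theorem a priori depends on $L_j$. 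Both are controlled through the split summand $I_{n-1}$, which lowers the effective codimension, governs the local representation conditions, and lets one peel off the small part of $N_j$ and absorb it into the unimodular block. For the base case $n=1$ I would appeal to the known finiteness results for representations of integers, namely the classification of almost-universal ternary forms together with the effective bounds for forms of rank at least four.

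The main obstacle is the \emph{uniformity} of the threshold. The naive representation theorems yield a ``sufficiently large'' bound depending on each $L_j$, whereas finiteness demands a bound depending only on $m$ and $n$; this is precisely where the two structural reductions must be combined, with the finiteness of $\mathcal E_n(L_j)$ confining the genuinely problematic behavior---spinor exceptions and the finitely many low-rank, almost-universal configurations---to a controlled range. Making this uniformity precise, rather than the local--global representation theorem itself, is the crux of the proof.
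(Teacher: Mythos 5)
There is a genuine gap, and you have in fact flagged it yourself: the entire argument funnels into the claim that there is a threshold $D(m,n)$, depending only on $m$ and $n$, beyond which every rank-$n$ lattice of discriminant $> D(m,n)$ is represented by every $L$ with $|\mathcal E_n(L)|=m$ --- and you then say that establishing this uniformity ``is the crux of the proof'' without supplying it. As written, the proposal is a reduction of the theorem to an unproved statement that is essentially equivalent to the theorem (the finiteness of size-$m$ exceptional sets immediately yields such a $D(m,n)$, and conversely). The tools you propose for closing it do not close it: asymptotic local--global theorems of Hsia--Kitaoka--Kneser type require the rank of $L$ to exceed roughly $2n+3$ and produce thresholds depending on $L$ itself, whereas here $L$ ranges over an infinite family that includes lattices of rank as small as $n+1$ or $n+2$ (e.g.\ for $n=1$ the relevant $L$ are quaternary, precisely the regime where local--global fails and where even the $m=2$ count required the substantial case analysis of Barowsky et al.). The splitting $L\cong I_{n-1}\perp L''$ is correct but does not repair this, since $L''$ may still have small rank and is not controlled. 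Note also a smaller slip: the exceptional set is closed under passing to integral \emph{sublattices} of its members is false and under overlattices is true but unused; it does not ``localize the content to the discriminant bound.''

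The paper's proof avoids all arithmetic of representations. It uses only the Kim--Kim--Oh finiteness theorem: every set $\mathcal S$ of classes of bounded rank admits a \emph{finite} $\mathcal S$-universality criterion set. Fixing a finite $\Phi_n$-criterion set $\mathcal X$, any exceptional set $\mathcal E=\mathcal E_n(L)$ of size $m$ must meet $\mathcal X$ in a nonempty set $\mathcal R_1$; if $\mathcal R_1\ne\mathcal E$, then $\mathcal E$ must also meet the (pre-chosen, finite) $(\Phi_n\setminus\mathcal R_1)$-universality criterion set in a nonempty set $\mathcal R_2$ disjoint from $\mathcal R_1$, and so on. After at most $m$ steps $\mathcal E$ is exhibited as a union of pieces each drawn from one of finitely many pre-constructed finite families, so there are only finitely many possible $\mathcal E$. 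If you want to salvage your outline, the missing ingredient is exactly this combinatorial use of finite universality criterion sets, not a quantitative representation theorem.
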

\begin{proof}
We fix a $\Phi_n$-universality criterion set $\mathcal X$ and construct families of finite subsets $\mathfrak X_1, \ldots, \mathfrak X_m$ of $\Phi_n$  as follows.  The family $\mathfrak X_1$  is the set containing all nonempty subsets of $\mathcal X$. Suppose that we have constructed the families $\mathfrak X_1, \ldots, \mathfrak X_k$ for $1\leq k < m$.   For every selection of members $\mathcal R_1, \ldots, \mathcal R_k$ in $\mathfrak X_1, \ldots, \mathfrak X_k$ respectively,  fix a choice of an $\Phi_n\setminus (\mathcal R_1 \cup \cdots \cup \mathcal R_k)$-universality criterion set.  The family $\mathfrak X_{k+1}$ is the set containing all these universality criterion sets and their nonempty subsets.  If $\mathcal T$ is one of those sets $\mathcal R_1\cup \cdots \cup \mathcal R_k$,  $\text{E}(\mathcal T)$ denotes the $(\Phi_n\setminus \mathcal T)$-universality criterion set chosen in the process.  Let $\overline{\mathfrak X}$ be the union $\mathfrak X_1 \cup \cdots \cup \mathfrak X_m$, which is a finite set independent of any $n$-exceptional sets.

Let $\mathcal E$ be an $n$-exceptional set of size $m$,  and $L$ be an integral lattice such that $\mathcal E_n(L)=\mathcal E$.   If $\mathcal X\cap \mathcal E$ is the empty set, then $L$ would be $n$-universal which is a contradiction.  Thus,  $\mathcal R_1: = \mathcal X \cap \mathcal E$ is a nonempty subset of $\mathcal X$ of size $m_1 \le m$.  If $m = m_1$, then $\mathcal E = \mathcal R_1 \in \mathfrak X_1 \subseteq \overline{\mathfrak X}$.  Suppose that $m_1 < m$, and let us assume that  $\text{E}(\mathcal R_1)\cap \mathcal E = \emptyset$.  Then $L$ represents all classes in $\Phi_n\setminus \mathcal R_1$. In particular, $L$ represents all classes in $\mathcal E\setminus \mathcal R_1$.   This is impossible since any class in $\mathcal E$ by definition is not represented by $L$.

So, $\mathcal R_2: = \text{E}(\mathcal R_1)\cap \mathcal E$ is a nonempty subset of $\text{E}(\mathcal R_1)$ of size $m_2 \leq m - m_1$.  If $m_2 = m - m_1$, then $\mathcal E = \mathcal R_1 \cup \mathcal R_2$ which is contained in $\mathfrak X_1 \cup \mathfrak X_2 \subseteq \overline{\mathfrak X}$.  Suppose $m_2 < m - m_1$.  If $\text{E}(\mathcal R_1 \cup \mathcal R_2)\cap \mathcal E = \emptyset$, then $L$ would represent all classes in $\mathcal E\setminus (\mathcal R_1 \cup \mathcal R_2)$ which is not possible.  So, we may continue this argument and decompose $\mathcal E$ into the union of $\mathcal R_1, \ldots, \mathcal R_k$ with $\mathcal R_j \in \mathfrak X_j$ for $j = 1, \ldots, k \leq m$.  Then $\mathcal E$ is contained in the finite set $\overline{\mathfrak X}$.  This proves the theorem.
\end{proof}

\section{The root lattices}

Suppose that $L_1, \ldots, L_t$ are integral lattices.  For $i = 1, \ldots, t$, let $\bx_i$ be a vector in $L_i^\#$.  Define
\begin{equation} \label{gluesum}
L_1\cdots L_t\left[\bx_1\cdots \bx_t\right]: = (L_1 \perp \cdots \perp L_t) + \z[\bx_1 + \cdots + \bx_t].
\end{equation}
This lattice is integral if and only if $Q(\bx_1 + \cdots + \bx_t)$ is an integer.  If $L_i = \z[\bz_i] \cong \langle a \rangle$ for some integer $a$ and $\bx_i = \frac{\bz_i}{m}$, then in the notation $L_1\cdots L_t\left[\bx_1\cdots \bx_t\right]$  we will use ``$a$" instead of $L_i$ and replace $\bx_i$ by $\frac{1}{m}$.

An integral lattice is a root lattice if it is spanned by its roots, i.e. vectors $\bx$ such that $Q(\bx) \leq 2$.  The indecomposable root lattices are $I_1$, $A_n$, $D_n (n \geq 4)$, $E_6, E_7,$ and $E_8$.  The readers can find a detailed analysis of these root lattices in \cite[Chapter 4]{cs}.  We will only review some of the properties useful for later discussion.  In \eqref{gluesum}, if $L_i$ is a root lattice, then $\bx_i$ is chosen from the set of glue vectors defined in \cite[Chapter 4]{cs}.  In this case, each glue vector is denoted by $\lglue\ell\rglue$ for some nonnegative integer $\ell$ and we simply use ``$\ell$" in \eqref{gluesum}.  These glue vectors for root lattices have the property that they are vectors of the shortest length in their cosets.

For $n \geq 1$,  the root lattice $A_n$ is
$$A_n = \{(a_0, a_1, \ldots, a_n) \in \z^{n+1} : a_0 + \cdots + a_n = 0\}$$
which is viewed as a sublattice in $\z^{n+1}$.  It is an integral lattice of rank $n$ and discriminant $n + 1$.
Its glue vectors are
\begin{equation} \label{gluean}
\lglue i\rglue  = \left(\frac{i}{n+1}, \ldots, \frac{i}{n+1}, \frac{-j}{n+1}, \ldots, \frac{-j}{n+1} \right),
\end{equation}
with $j$ components equal to $i/(n+1)$, and $i$ components equal to $-j/(n+1)$, where $i + j = n+1$ and $0 \leq i \leq n$.  As an example of illustrating \eqref{gluesum}, $A_n\, a \left[i\, \frac{1}{d}\right]$ is the lattice
$$(A_n\perp \z[\bz]) + \z\left[\lglue i\rglue + \frac{\bz}{d}\right]$$
where $\bz$ is a vector orthogonal to $A_n$ such that $Q(\bz) = a$.  Another example is $A_{11}\, A_5[2\, 2]$, which is the lattice $(A_{11}\perp A_5) + \z\left[\lglue 2\rglue + \lglue 2\rglue\right]$ of rank 17.  Note that the first ``$\lglue 2\rglue$" is the glue vector for $A_{11}$ and the second one is for $A_5$.

For any $n \geq 4$, the root lattice $D_n$ is
$$D_n = \{(a_1, \ldots, a_n)\in \z^n : a_1 + \cdots + a_n \equiv 0 \mod 2 \}.$$
It is an integral lattice of rank $n$ and discriminant 4.  Its glue vectors are
\begin{equation} \label{gluedn}
\begin{split}
\lglue 0\rglue & =  (0, \ldots, 0),\\
\lglue 1\rglue & =  (1/2, \ldots, 1/2),\\
\lglue 2\rglue & =  (0, \ldots,0, 1),\\
\lglue 3\rglue & =  (1/2, \ldots, 1/2, -1/2).\\
\end{split}
\end{equation}

For $n \not \equiv 0$ mod 4, $I_n$ is the only integral lattice of rank $n$ which properly contains $D_n$.  However, if $n \equiv 0$ mod 4, there is another integral lattice of rank $n$ besides $I_n$ which properly contains $D_n$.  This lattice is necessarily unimodular and is built by adjoining the glue vector $\lglue 1\rglue$ to $D_n$, which we will denote by $D_n[1]$.

\begin{lem} \label{dnrep}
If $L$ is an integral lattice which represents $D_n$ imprimitively, then $L$ must be isometric to $I_n\perp N$ or $D_n[1] \perp N$ for some integral lattice $N$.
\end{lem}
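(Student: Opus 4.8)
The plan is to reduce the statement to the classification of integral overlattices of $D_n$ recalled just before the lemma, combined with the splitting property of unimodular sublattices. Suppose $\sigma : D_n \longrightarrow L$ is an imprimitive representation, and let $K := \q\sigma(D_n) \cap L$ be the primitive closure of $\sigma(D_n)$ in $L$. First I would observe that $K$ is an integral lattice of rank $n$, being a sublattice of the integral lattice $L$ on the quadratic space $\q\sigma(D_n)$, and that $K \supsetneq \sigma(D_n)$ precisely because $\sigma$ is imprimitive.

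Next, since $\sigma$ is an isometry onto its image, $\sigma(D_n) \cong D_n$, so $K$ is an integral lattice on $\q\sigma(D_n)$ that properly contains a copy of $D_n$. By the description of the integral lattices of rank $n$ properly containing $D_n$ given above, $K$ must be isometric to $I_n$ when $n \not\equiv 0 \Mod{4}$, and to either $I_n$ or $D_n[1]$ when $n \equiv 0 \Mod{4}$.

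Finally I would invoke the fact, already noted, that a unimodular sublattice of an integral lattice is an orthogonal summand. Both $I_n$ and $D_n[1]$ are unimodular, so $K$ splits off $L$; that is, $L \cong K \perp N$, where $N$ is the orthogonal complement of $K$ in $L$. Hence $L \cong I_n \perp N$ or $L \cong D_n[1] \perp N$, as claimed.

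The argument is short, and its only genuine content is the hypothesis of imprimitivity: it is exactly what forces the primitive closure $K$ to be strictly larger than $\sigma(D_n)$, and therefore to be one of the two unimodular overlattices rather than $D_n$ itself (which, having discriminant $4$, would not split off). The one point deserving care is to confirm that $K$ is precisely one of the overlattices enumerated before the lemma, namely that $K$ lies on the same rational space as $\sigma(D_n)$ and contains it properly; both facts are immediate from the definition of the primitive closure, so I do not expect any real obstacle.
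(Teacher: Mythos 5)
Your argument is correct and is exactly the reasoning the paper has in mind: the paper's proof is literally ``This is clear,'' relying on the classification of integral overlattices of $D_n$ and the splitting of unimodular sublattices stated just before the lemma. You have simply written out the details of that intended one-line argument, so there is nothing to add.
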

\begin{proof}
This is clear.
\end{proof}

\begin{lem} \label{andn}
Let $L$ be an integral lattice of rank $n + 1$.
\begin{enumerate}
\item If $L$ primitively represents $D_n$, then $L$ is isometric to
$$D_{n}\perp \langle a \rangle, \quad D_{n}\, (16a - 4n)\left[1\, \frac{1}{4}\right], \quad \mbox{ or } \quad D_{n}\, 4a\left[2\, \frac{1}{2}\right]$$

\item If $L$ primitively represents $A_n$, then $L$ is isometric to
$$A_n\, k(n+1)\left[i\, \frac{1}{n+1}\right]$$
for $0 \leq i \leq \lfloor \frac{n+1}{2}\rfloor$ and some integer $k$ such that $k \equiv i^2 \mod n+1$.
\end{enumerate}
\end{lem}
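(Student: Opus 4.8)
The plan is to analyze a primitive corank-one representation through the orthogonal complement of the represented lattice and the resulting gluing data. Write $M = D_n$ in part (1) and $M = A_n$ in part (2), identify $M$ with a primitive sublattice of $L$, and let $K = M^\perp$ in $L$; since $L$ has rank $n+1$, the complement $K = \z[\bz]$ is a rank-one lattice, say with $Q(\bz) = a$. Because $M\perp K$ is integral and sits inside $L$, we have the sandwich $M\perp K\subseteq L\subseteq (M\perp K)^\# = M^\#\perp K^\#$, so $L$ is determined by its glue group $\Gamma := L/(M\perp K)$. Since $K=M^\perp$ is automatically saturated in $L$, the projection $\pi_M\colon\Gamma\to M^\#/M$ is injective; since $M$ is assumed primitive, the projection $\Gamma\to K^\#/K\cong\z/a$ is injective as well. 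Hence $\Gamma$ is cyclic and $\pi_M$ identifies it with a cyclic subgroup of $M^\#/M$; fixing a generator, $\Gamma$ is generated by the class of a single $\bv = \bx + \bw$ with $\bx\in M^\#$, $\bw\in K^\#$, the class of $\bx$ generating $\pi_M(\Gamma)$. The only condition needed to make $L$ integral is $Q(\bv)\in\z$: indeed $B(\bv, M)\subseteq\z$ and $B(\bv,\bz)\in\z$ hold automatically from $\bx\in M^\#$ and $\bw\in K^\#$. Thus each case reduces to choosing a cyclic subgroup of $M^\#/M$, matching it with an element of $\z/a$ of the same order, and imposing $Q(\bv)\in\z$.

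For part (2) I would use that $A_n^\#/A_n$ is cyclic of order $n+1$, generated by $\lglue 1\rglue$, with $\lglue i\rglue = i\lglue 1\rglue$ and $Q(\lglue i\rglue) = i(n+1-i)/(n+1)$. Writing $\pi_M(\Gamma) = \langle\lglue i\rglue\rangle$ and taking $\bw$ of the form $\tfrac{1}{n+1}\bz$, one computes $Q(\bv) = \frac{i(n+1-i)}{n+1} + \frac{a}{(n+1)^2}$; after putting $a = k(n+1)$ and reducing modulo $\z$, the condition $Q(\bv)\in\z$ becomes the congruence $k\equiv i^2\Mod{n+1}$. This yields $L\cong A_n\, k(n+1)\left[i\, \frac1{n+1}\right]$. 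Finally the isometry $-1$ of $A_n$ carries $\lglue i\rglue$ to $\lglue n+1-i\rglue$, so after possibly replacing $i$ by $n+1-i$ one may assume $0\le i\le\lfloor\frac{n+1}{2}\rfloor$.

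For part (1) I would separate the two parities, since $D_n^\#/D_n\cong\z/4\z$, generated by $\lglue 1\rglue$ with $\lglue 2\rglue = 2\lglue 1\rglue$, when $n$ is odd, while $D_n^\#/D_n\cong\z/2\z\times\z/2\z$, generated by $\lglue 1\rglue$ and $\lglue 2\rglue$ with $\lglue 3\rglue = \lglue 1\rglue + \lglue 2\rglue$, when $n$ is even. Running through the cyclic subgroups $\pi_M(\Gamma)$ gives: the trivial subgroup, producing $D_n\perp\langle a\rangle$; the group $\langle\lglue 2\rglue\rangle$, where $\bv = \lglue 2\rglue + \tfrac12\bz$ and $Q(\bv) = 1 + a/4\in\z$ forces $a = 4a'$, producing $D_n\, 4a'\left[2\,\frac12\right]$; and the group $\langle\lglue 1\rglue\rangle$. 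When $n$ is odd the latter has order $4$, so $\bv = \lglue 1\rglue + \tfrac14\bz$ and $Q(\bv) = n/4 + a/16\in\z$ forces $a\equiv -4n\Mod{16}$, i.e. $a = 16a'-4n$, producing $D_n\,(16a'-4n)\left[1\,\frac14\right]$. The coordinate sign change on $\z^n$ restricts to an isometry of $D_n$ interchanging $\lglue 1\rglue$ and $\lglue 3\rglue$ and fixing $\lglue 2\rglue$, so the subgroup $\langle\lglue 3\rglue\rangle$ yields nothing new.

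The step I expect to be the main obstacle is the even-$n$ subcase of $\langle\lglue 1\rglue\rangle$, which \emph{a priori} gives the order-two gluing $\bv = \lglue 1\rglue + \tfrac12\bz$ with $Q(\bv) = (n+a)/4$, rather than the $\tfrac14$-gluing appearing in the displayed form. One must check that this lattice is nonetheless isometric to one written as $D_n\,(16a'-4n)\left[1\,\frac14\right]$. The reason is that the displayed vector $\bz$ need not generate the saturated orthogonal complement: since $2\lglue 1\rglue\in D_n$ when $n$ is even, the vector $\tfrac12\bz$ already lies in $D_n\,(16a'-4n)[1\,\frac14]$, so the genuine complement of $D_n$ there is $\langle 4a'-n\rangle$ and the lattice equals $D_n\,(4a'-n)\left[1\,\frac12\right]$; choosing $a' = (a+n)/4$ matches it with $\bv = \lglue 1\rglue + \tfrac12\bz$. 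The same caveat applies in part (2) when $\gcd(i, n+1) > 1$, where the denominator $n+1$ in the displayed notation exceeds the order of $\lglue i\rglue$ and the displayed $\bz$ is not saturated. Keeping careful track of which complement is saturated, while verifying that only the norm condition $Q(\bv)\in\z$ is binding, is what makes the displayed shapes an exhaustive and correct list.
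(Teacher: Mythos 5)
Your argument is correct and is essentially the paper's own proof: both identify the rank-one orthogonal complement $\z[\bz]$ of the primitively embedded $M$ and classify $L$ by the glue datum in $M^\#/M$ together with the integrality condition $Q(\bv)\in\z$, then run through the cosets $\lglue i\rglue$. Your careful treatment of the non-saturated cases (the order-two glue $\lglue 1\rglue+\tfrac12\bz$ for $D_n$ with $n$ even, and $\gcd(i,n+1)>1$ for $A_n$) corresponds exactly to the paper's rescalings $\bz\mapsto 2\bz$ and $\bz'=\tfrac{n+1}{f}\bz$.
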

\begin{proof}
Suppose that $M$ is an integral lattice of rank $n$ which is a primitive sublattice of an integral lattice of rank $n + 1$.  Then $L = M + \z[\bx]$ for some $\bx \in L$.  Let $\bz$ be a vector in $L$ which generates the orthogonal complement of $M$ in $L$.  Then $\bz = \by + f\bx$ for some $\by \in M$ and an integer $f$, which can be assumed to be positive, so that
$$L = (M \perp \z[\bz]) + \z\left[-\frac{\by}{f} + \frac{\bz}{f}\right].$$
It is clear that $\by': = -\frac{\by}{f} \in M^\#$ and hence we may choose $\by'$ from a fixed set of coset representatives of $M^\#/M$.  Moreover, since $\by \in M$, the order of the coset containing $\by'$ as an element of $M^\#/M$ must divide $f$.  But the vector $\bz$ is primitive in $L$; thus $f$ must be the order of the coset $\by' + M$ in $M^\#/M$.
\bigskip

\noindent (1) When $M = D_n$, the vector $\by'$ can be chosen to be $\lglue i\rglue$, $0 \leq i \leq 3$.  Since there is an isometry of $D_n$ which exchanges $\lglue 1\rglue$ and $\lglue 3\rglue$, the choices of $\by'$ can be trimmed down to $\lglue 0\rglue, \lglue 1\rglue$, and $\lglue 2\rglue$.  If $f = 1$, then $\by' = \lglue 0\rglue \in D_n$ and hence $L \cong D_n \perp \langle a \rangle$ for some $a \geq 1$. If $f = 4$, then $\by' = \lglue 1\rglue$.  Suppose that $Q(\lglue 1\rglue + \frac{\bz}{4}) = a$, which must be $> Q(\lglue 1\rglue) = \frac{n}{4}$.  Then $\frac{n}{4} + \frac{Q(\bz)}{16} = a$, meaning that $Q(\bz)= 16a - 4n$ and $L \cong D_n\, (16a - 4n)\left[1\, \frac{1}{4}\right]$.  Finally, if $f = 2$, then $\by'$ could be $\lglue 2\rglue$ or $\lglue 1\rglue$ (only when $n \equiv 0$ mod 2).  If $\by' = \lglue 2\rglue$, then one can show that $L \cong D_n\, 4a\left[2\, \frac{1}{2}\right]$ for some $a \geq 1$.  But if $\by' = \lglue 1\rglue$, then $Q(\bz) = 4a - n$ and $L$ is isometric to
\begin{eqnarray*}
(D_n\perp \z[\bz]) + \z\left[\lglue 1\rglue + \frac{\bz}{2} \right] & = & (D_n \perp \z[2\bz]) + \z\left[\lglue 1 \rglue + \frac{2\bz}{4}\right] \\
 & = & D_n\, (16a - 4n)\left[1\, \frac{1}{4}\right].
\end{eqnarray*}
\bigskip

\noindent (2)  When $M = A_n$, then $\by'$ can be chosen to be $\lglue i\rglue$, $0 \leq i \leq n$.  However, since $\lglue i\rglue = -\lglue j\rglue$ if $i + j = n + 1$, we may assume that $\by' = \lglue i\rglue$ with $0 \leq i \leq \lfloor \frac{n+1}{2} \rfloor$.  If the order of $\lglue i\rglue$ is $f$, let $\bz': = \frac{n+1}{f}\bz$. Then $L$ is isometric to
$$(A_n\perp \z[\bz]) + \z\left[\lglue i\rglue+ \frac{\frac{n+1}{f}\bz}{n+1}\right] = (A_n\perp \z[\bz']) + \z\left[\lglue i\rglue + \frac{\bz'}{n+1}\right].$$
If $Q(\lglue i\rglue + \frac{\bz'}{n+1}) = a$, then $Q(\bz') = (n+1)(a(n+1) - i(n+1) + i^2)$.  This proves (2).
\end{proof}

As a consequence of this lemma, for small values of $n$ it is not too difficult to determine the complete list of integral lattices of rank $n + 1$ that represent $A_n$.  When $n = 2$, any representation of $A_2$ into an integral lattice must be a primitive representation.  So, an integral lattice which represents $A_2$ must be isometric to
\begin{equation} \label{a2}
A_2 \perp \langle a \rangle \quad \mbox{ or }  \quad A_2\, (9a - 6) \left[1\,\frac{1}{3}\right] \cong \begin{pmatrix} 2 & -1 & 0\\ -1 & 2 & -1\\ 0 & -1 & a \end{pmatrix}
\end{equation}
for some suitable integer $a$.  By considering the discriminant, we see that the lattices in \eqref{a2} are mutually non-isometric.  Note that $A_2\, 3\left[1\, \frac{1}{3}\right] \cong I_3$ and $A_2 \, 12 \left[1\, \frac{1}{3}\right] \cong A_3$.

For $n = 3$,   it is easy to see that $I_3$ is the only maximal integral lattice of rank 3 that represents $A_3$.   Therefore, an integral lattice of rank 4 which represents $A_3$ must be isometric to
\begin{equation} \label{a3}
I_3 \perp \langle b \rangle, \quad A_3 \perp \langle b \rangle, \quad A_3\, 4b\left[2\, \frac{1}{2}\right],\quad \mbox{ or }\quad A_3\, (16b - 12)\left[1\, \frac{1}{4}\right]
\end{equation}
for some suitable integer $b$.  Note that $A_3\, 4 \left[2\, \frac{1}{2}\right] \cong D_4$, $A_3\, 4\left[1\, \frac{1}{4}\right] \cong I_4$, and $A_3\, 20\left[1\, \frac{1}{4}\right] \cong A_4$.

The case for $A_4$ is similar to that for $A_2$ since the discriminant of $A_4$ is also a prime.  An integral lattice which represents $A_4$ must be isometric to
\begin{equation} \label{a4}
A_4 \perp \langle b \rangle, \quad M_c: = A_4\, (25c - 20)\left[1\, \frac{1}{5}\right], \quad \mbox{ or } \quad K_d: = A_4\, (25d - 5)\left[2\,\frac{1}{5}\right],
\end{equation}
for some suitable integers $b$, $c$, and $d$.  Note that $M_1 \cong I_5$, $M_2 \cong A_5$, and $K_1 \cong D_5$.

\begin{lem} \label{a4rep}
The smallest positive integer $c$ for which $M_{c}$ is represented by $K_d$ is $c = 4d$, and the smallest positive integer $d$ for which $K_{d}$ is represented by $M_c$ is $d = 4c - 3$.
\end{lem}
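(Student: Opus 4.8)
The plan is to combine a discriminant (index) count with the explicit construction of two index-$2$ embeddings. First I record the discriminants. Since $\det(A_4)=5$ and each of $M_c$, $K_d$ is obtained from $A_4\perp\z[\bz]$ by adjoining a glue vector of order $5$ in $A_4^\#/A_4\cong\z/5$, one finds $\det(M_c)=5c-4$ and $\det(K_d)=5d-1$ (consistent with $M_1\cong I_5$, $M_2\cong A_5$, $K_1\cong D_5$).

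Because $M_c$ and $K_d$ both have rank $5$, any representation $M_c\hookrightarrow K_d$ identifies $M_c$ with a finite-index sublattice of $K_d$, so $5c-4=t^2(5d-1)$ for the index $t$. As $5c-4\equiv1$ and $5d-1\equiv4\pmod 5$, the case $t=1$ is impossible; hence $t\ge2$, so $5c-4\ge4(5d-1)$, i.e. $c\ge4d$, with equality exactly when $t=2$ (which does give the integer $c=4d$). The same argument applied to $K_d\hookrightarrow M_c$ gives $5d-1=s^2(5c-4)$ with $s\ge2$, whence $d\ge4c-3$, and $d=4c-3$ when $s=2$. This produces the two lower bounds; it remains to realize them.

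For the embeddings I would exploit that the orthogonal vector is scaled by a factor $2$. Write $K_d=(A_4\perp\z[\bz])+\z[\lglue 2\rglue+\tfrac{\bz}{5}]$ with $Q(\bz)=5(5d-1)$, and $M_{4d}=(A_4\perp\z[\bw])+\z[\bm g]$ with $\bm g=\lglue 1\rglue+\tfrac{\bw}{5}$ and $Q(\bw)=20(5d-1)=4\,Q(\bz)$. Define $\sigma$ to be the identity on $A_4$ and $\sigma(\bm g)=\lglue 1\rglue+\tfrac{2\bz}{5}$; since $Q(2\bz)=4Q(\bz)=Q(\bw)$ and $\bz\perp A_4$, this preserves every Gram entry, so $\sigma$ is an isometry of $M_{4d}$ onto a sublattice of the ambient space. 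The key point is that the image lies in $K_d$: using $2\lglue 2\rglue\equiv-\lglue 1\rglue\pmod{A_4}$ one has $\lglue 1\rglue+\tfrac{2\bz}{5}=-2\bigl(\lglue 2\rglue+\tfrac{\bz}{5}\bigr)+\bm a\in K_d$ for a suitable $\bm a\in A_4$. The image discriminant $4(5d-1)$ forces index $2$, so $M_{4d}$ is represented by $K_d$, matching $c=4d$. Symmetrically, for $K_{4c-3}\hookrightarrow M_c$ the orthogonal vector again has four times the norm, and sending the glue generator $\lglue 2\rglue+\tfrac{\bz}{5}$ to $\lglue 2\rglue+\tfrac{2\bw}{5}=2\bigl(\lglue 1\rglue+\tfrac{\bw}{5}\bigr)-\bm b\in M_c$ (using $2\lglue 1\rglue\equiv\lglue 2\rglue\pmod{A_4}$) gives the required index-$2$ isometry.

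The Diophantine and congruence bookkeeping of the first two steps is routine; the crux is the construction, and within it the one genuinely delicate verification is that the image of the glue generator lands in the target lattice. That hinges precisely on the two congruences $2\lglue 1\rglue\equiv\lglue 2\rglue$ and $2\lglue 2\rglue\equiv-\lglue 1\rglue$ in $A_4^\#/A_4\cong\z/5$, together with the norm-doubling $Q(\bw)=4Q(\bz)$; pinning down the correct scalar ($2$ versus $-2$) and the auxiliary vector in $A_4$ is where I would be most careful.
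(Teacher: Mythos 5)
Your approach is the same as the paper's: a discriminant/index comparison ruling out index $1$ to get the lower bounds $c\ge 4d$ and $d\ge 4c-3$, followed by an explicit index-$2$ embedding to realize them. The bookkeeping and the second embedding ($K_{4c-3}\hookrightarrow M_c$ via $2\lglue 1\rglue\equiv\lglue 2\rglue$) are correct. The one error is exactly at the spot you flagged: the identity $\lglue 1\rglue+\tfrac{2\bz}{5}=-2\bigl(\lglue 2\rglue+\tfrac{\bz}{5}\bigr)+\bm a$ is false, since the right-hand side equals $\lglue 1\rglue-\tfrac{2\bz}{5}+\bm a'$ for some $\bm a'\in A_4$, and indeed $\lglue 1\rglue+\tfrac{2\bz}{5}$ does \emph{not} lie in $K_d$ (its class modulo $A_4\perp\z[\bz]$ would force $k\equiv 2$ from the $\bz$-component but $2k\equiv 1\Mod 5$ from the glue component). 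The fix is to take $\sigma(\bw)=-2\bz$, so that $\sigma(\bm g)=\lglue 1\rglue-\tfrac{2\bz}{5}=-2\bigl(\lglue 2\rglue+\tfrac{\bz}{5}\bigr)+\bm a\in K_d$; this is precisely what the paper does, and it changes no norms since $Q(-2\bz)=Q(2\bz)$. With that one sign corrected, your proof matches the paper's.
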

\begin{proof}
We only give the proof of the first assertion; the second assertion can be proved in the same way.

The discriminants of $K_d$ and $M_{c}$ are $5d - 1$ and $5c - 4$, respectively. If $M_{c}$ is represented by $K_d$, then $(5d - 1)k^2 = 5c - 4$ for some positive integer $k$.  Clearly,  $k$ cannot be 1 and hence $5c - 4 \geq (5d - 1)4$, meaning that $c \geq 4d$.  So, it suffices to show that $M_{4d}$ is represented by $K_d$.

Let $\bz$ be a vector in $K_d$ orthogonal to $A_4$ such that $Q(\bz) = 25d - 5$.  Then, according to \eqref{a4}, we may assume that $K_d$ is the lattice $(A_4 \perp \z[\bz]) + \z\left[\lglue 2\rglue + \frac{\bz}{5}\right]$.  Then $\lglue 4\rglue + \frac{2\bz}{5}$ is in $K_d$, thus $M_{4d} \cong (A_4\perp \z[-2\bz]) + \z\left[\lglue 1\rglue + \frac{-2\bz}{5}\right]$ is in $K_d$.
\end{proof}

\begin{lem} \label{anrep}
Suppose that both $A_n\, k(n+1)\left[i\, \frac{1}{n+1}\right]$ and $A_n\, \ell(n+1)\left[j\, \frac{1}{n+1}\right]$, $0 \leq i, j \leq \lfloor \frac{n+1}{2} \rfloor$,  have $A_n$ as their root sublattices.  Then $A_n\, k(n+1)\left[i\, \frac{1}{n+1}\right]$ is represented by $A_n\, \ell(n+1)\left[j\, \frac{1}{n+1}\right]$ if and only if there exists a positive integer $t$ such that
$$jt\equiv \pm i \mod n+1 \quad \mbox{ and } \quad k = \ell t^2.$$
\end{lem}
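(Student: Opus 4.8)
\emph{Sketch of the intended argument.} Throughout, write the two lattices as $L = (A_n\perp\z[\bz_1])+\z[\bv_1]$ and $M = (A_n\perp\z[\bz_2])+\z[\bv_2]$, where $\bz_1,\bz_2$ are orthogonal to $A_n$ with $Q(\bz_1)=k(n+1)$ and $Q(\bz_2)=\ell(n+1)$, and the glue vectors are $\bv_1=\lglue i\rglue+\frac{\bz_1}{n+1}$ and $\bv_2=\lglue j\rglue+\frac{\bz_2}{n+1}$, exactly as produced by Lemma \ref{andn}. Since both lattices have rank $n+1$, any representation $\sigma\colon L\to M$ is an isometry of $L$ onto a full-rank sublattice of $M$. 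The first step is a discriminant count: because $[L:A_n\perp\z[\bz_1]]=n+1$, the discriminant of $L$ equals $k$, and likewise that of $M$ equals $\ell$; hence $k = [M:\sigma(L)]^2\,\ell$, so setting $t:=[M:\sigma(L)]$ already yields the relation $k=\ell t^2$ with $t$ a positive integer. All the remaining content is in the congruence.

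For the direct implication, the key point is that $\sigma$ must send roots to roots. Since each lattice has root sublattice \emph{exactly} $A_n$, every root of $L$ lies in its $A_n$, and its $\sigma$-image is a vector of norm $\le 2$ in $M$, hence lies in the $A_n$ of $M$; therefore $\sigma(A_n)\subseteq A_n$, and comparing discriminants (both equal $n+1$) forces $\phi:=\sigma|_{A_n}$ to be an automorphism of $A_n$. I will then invoke the standard fact that every automorphism of $A_n$ acts on the discriminant group $A_n^{\#}/A_n\cong\z/(n+1)$ as multiplication by some $\epsilon\in\{\pm 1\}$; writing the generator as $\lglue 1\rglue$ and using $\lglue a\rglue\equiv a\lglue 1\rglue\pmod{A_n}$, this reads $\phi(\lglue i\rglue)\equiv\epsilon\,i\pmod{A_n}$.

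Next I track the orthogonal complement. As $\sigma(\bz_1)\perp\sigma(A_n)=A_n$, the vector $\sigma(\bz_1)$ lies in the rank-one lattice $M\cap(\q A_n)^{\perp}$; comparing its norm $k(n+1)$ with $Q(\bz_2)=\ell(n+1)$ and using $k=\ell t^2$ pins it down to $\sigma(\bz_1)=\pm t\,\bz_2$. Here one must notice that $M\cap(\q A_n)^{\perp}$ is generated by $\bz_2/\gcd(j,n+1)$ rather than by $\bz_2$; the $\gcd$ factor, however, cancels exactly against the discriminant count, so the clean identity $\sigma(\bz_1)=\pm t\,\bz_2$ survives. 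Consequently $\sigma(\bv_1)=\phi(\lglue i\rglue)+\frac{\pm t\,\bz_2}{n+1}$, and demanding that this lie in $M$ forces, upon reading the $A_n$-component modulo $A_n$ in $\z/(n+1)$, the congruence $\epsilon\,i\equiv(\pm t)\,j$, that is $jt\equiv\pm i\pmod{n+1}$, the single sign absorbing both $\epsilon$ and the sign in $\sigma(\bz_1)=\pm t\,\bz_2$.

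For the converse I reverse this construction: given $t>0$ with $k=\ell t^2$ and $jt\equiv\pm i\pmod{n+1}$, choose $\epsilon\in\{\pm1\}$ matching the sign, put $\phi=\epsilon\,\mathrm{id}$ on $A_n$, and define a linear isometry $\q L\to\q M$ by $\phi$ on $\q A_n$ together with $\bz_1\mapsto t\,\bz_2$ (an isometry since $Q(t\,\bz_2)=t^2\ell(n+1)=k(n+1)=Q(\bz_1)$). It remains to verify that the image $\phi(\lglue i\rglue)+\frac{t\,\bz_2}{n+1}$ of $\bv_1$ lies in $M$, which the congruence delivers: it guarantees $\phi(\lglue i\rglue)-t\lglue j\rglue\in A_n$, so the image equals an element of $A_n$ plus $t\,\bv_2$, which is in $M$. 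I expect the main obstacle to be the forward direction, specifically pinning $\sigma|_{A_n}$ down to an automorphism of $A_n$ (which is precisely where the hypothesis ``root sublattice $=A_n$'' is used) and then correctly bookkeeping the two independent signs so that they collapse into the single $\pm$ in the final congruence.
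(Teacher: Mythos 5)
Your proof is correct and follows the same overall skeleton as the paper's: the index/discriminant count giving $k=\ell t^2$, the observation that the hypothesis on the root sublattices forces $\sigma|_{A_n}$ to be an automorphism of $A_n$, the identification $\sigma(\bz_1)=\pm t\,\bz_2$ of the orthogonal complements, and essentially the same explicit sublattice construction for the converse. The one genuine divergence is in how the congruence $jt\equiv\pm i$ is extracted in the forward direction. You invoke the structure of the automorphism group of $A_n$ (the Weyl group $S_{n+1}$ acts trivially on $A_n^{\#}/A_n\cong\z/(n+1)$ and $-1$ acts as $-1$) to conclude $\sigma(\lglue i\rglue)\equiv\pm\lglue i\rglue\pmod{A_n}$. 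The paper avoids any appeal to $\mathrm{Aut}(A_n)$: it uses only that $\sigma|_{A_n}$ preserves lengths together with the property, recorded in Section 3, that each glue vector is a shortest vector in its coset; from $\sigma(\lglue i\rglue)\equiv t\lglue j\rglue\equiv\lglue i'\rglue\pmod{A_n}$ it deduces $Q(\lglue i\rglue)=Q(\lglue i'\rglue)=\frac{i'(n+1-i')}{n+1}$ and hence $i'\in\{i,\ n+1-i\}$. The two routes cost about the same; yours makes the bookkeeping of the two independent signs (the $\epsilon$ from the automorphism and the sign in $\sigma(\bz_1)=\pm t\,\bz_2$) more transparent, while the paper's is more self-contained, needing no classification of $\mathrm{Aut}(A_n)$. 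Your explicit observation that $M\cap(\q A_n)^{\perp}$ is generated by $\bz_2/\gcd(j,n+1)$, with the $\gcd$ cancelling in the norm computation, addresses a point the paper passes over silently, and you handle it correctly.
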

\begin{proof}
Let $\bz$ be a vector orthogonal to $A_n$ such that $Q(\bz) = \ell(n+1)$ and
$$A_n\, \ell(n+1)\left[j\, \frac{1}{n+1}\right] = (A_n\perp \z[\bz]) + \z\left[\lglue j\rglue + \frac{\bz}{n+1}\right].$$
This lattice contains
$$(A_n + \z[t\bz]) + \z\left[t\lglue j\rglue + \frac{t\bz}{n+1}\right]$$
for any positive integer $t$.  This proves the ``if" part of the assertion of the lemma.

For the ``only if" part, let $\bx$ be a vector orthogonal to $A_n$ such that $Q(\bx) = k(n+1)$ and
$$A_n\, k(n+1)\left[i\, \frac{1}{n}\right] = (A_n\perp \z[\bx]) + \z\left[\lglue i\rglue + \frac{\bx}{n+1}\right].$$

Suppose that $\sigma$ is a representation sending $A_n\, k(n+1)\left[i\, \frac{1}{n}\right]$ into $A_n\, \ell(n+1)\left[j\, \frac{1}{n+1}\right]$.
Let $t$ be the group index of the image of $\sigma$ in $A_n\, \ell(n+1)\left[j\, \frac{1}{n+1}\right]$.  By comparing the discriminants, we have $\ell t^2 = k$.  Under the assumption on the root sublattices and the fact that $A_n$ is primitive in both $A_n\, \ell(n+1)\left[j\, \frac{1}{n+1}\right]$ and $A_n\, k(n+1)\left[i\, \frac{1}{n}\right]$, the restriction of $\sigma$ on $A_n$ becomes an isometry of $A_n$.  Moreover,
$$\sigma\left(\lglue i\rglue + \frac{\bx}{n+1}\right) \equiv t\lglue j\rglue + \frac{t\bz}{n+1} \mod A_n,$$
and hence $\sigma(\lglue i\rglue) \equiv t\lglue j\rglue$ mod $A_n$.  Let $i'$ be the integer between 0 and $n$ such that $jt\equiv i'$ mod $n + 1$.  The glue vector $\lglue i'\rglue$ has the shortest length among all vectors in its coset.   Therefore, $Q(\lglue i\rglue) = Q(\sigma(\lglue i \rglue)) \geq Q(\lglue i'\rglue)$.  By switching the roles of $i$ and $i'$ we deduce that $Q(\lglue i'\rglue) = Q(\lglue i\rglue)$.  This implies either $i = i'$ or $i + i' = n+1$, that is, $i \equiv \pm i'$ mod $n + 1$ which is what we need to prove.
\end{proof}

The root lattice $E_8$ is the unique (up to isometry) even unimodular lattice of rank 8.  If we use the even coordinate system for $E_8$ (see \cite[Page 120]{cs}), then the root lattice $E_7$ can be realized as
$$E_7: = \{(a_1, \ldots, a_8) \in E_8: a_1 + \cdots + a_8 = 0\}.$$
Its discriminant is 2.  Its glue vectors are
\begin{equation} \label{gluee7}
\begin{split}
\lglue 0\rglue & =  (0, 0,0,0,0,0,0, 0),\\
\lglue 1\rglue & =  (1/4, 1/4, 1/4, 1/4, 1/4, 1/4, -3/4, -3/4).\\
\end{split}
\end{equation}
It is known \cite{ko, p} that $E_7$ is additively indecomposable.

\begin{lem} \label{a8rep}
If $L$ is an integral lattice which represents $A_8$ imprimitively, then $L$ is isometric to $E_8 \perp N$ for some integral lattice $N$.
\end{lem}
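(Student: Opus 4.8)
The plan is to pass from the imprimitive representation to the primitive closure of the image and then classify the integral overlattices of $A_8$. Concretely, suppose $\sigma\colon A_8 \longrightarrow L$ is a representation whose image is not primitive, and let $M := \q\sigma(A_8) \cap L$ be the primitive closure of $\sigma(A_8)$ in $L$. Then $M$ is a rank-$8$ integral sublattice of $L$ that is primitive in $L$ and contains $\sigma(A_8) \cong A_8$. Imprimitivity of the representation says precisely that $\sigma(A_8) \subsetneq M$, so after identifying $\sigma(A_8)$ with $A_8$ we may regard $M$ as a proper integral overlattice $A_8 \subsetneq M \subseteq A_8^\#$.

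Next I would determine all such proper integral overlattices. These correspond to the nonzero subgroups of the glue group $A_8^\#/A_8 \cong \z/9\z$ on which the induced norm takes values in $\z$. Using \eqref{gluean} with $n = 8$, the glue vector $\lglue i\rglue$ has norm $Q(\lglue i\rglue) = i(9-i)/9$, so the only nonzero glue classes of integral norm are $\lglue 3\rglue$ and $\lglue 6\rglue = -\lglue 3\rglue$, both of norm $2$. Since the full group is not admissible (as $Q(\lglue 1\rglue) = 8/9 \notin \z$), the unique nonzero admissible subgroup is the order-$3$ subgroup $\{\lglue 0\rglue, \lglue 3\rglue, \lglue 6\rglue\}$, and the corresponding overlattice is $A_8 + \z[\lglue 3\rglue]$, of index $3$ over $A_8$. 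Hence $M$ must be isometric to this lattice.

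It remains to identify $A_8 + \z[\lglue 3\rglue]$ and to split it off. Its discriminant is $9/3^2 = 1$, so it is unimodular; it is also even, because $A_8$ is even, $Q(\lglue 3\rglue) = 2$, and the inner products $B(\lglue 3\rglue, \bx)$ for $\bx \in A_8$ lie in $\z$. By the uniqueness of the even unimodular lattice of rank $8$ recalled above, $M \cong E_8$. Finally, since $M$ is a unimodular sublattice of the integral lattice $L$, it must be an orthogonal summand; writing $N := M^\perp$ for its orthogonal complement in $L$ gives $L \cong E_8 \perp N$, as desired.

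The argument is essentially routine once the reduction to overlattices is made. The only step requiring care is the norm bookkeeping on the glue group: I must verify that no subgroup other than the order-$3$ one yields an integral lattice, and that the resulting index-$3$ overlattice is genuinely even and unimodular so that both the identification with $E_8$ and the splitting go through. I do not anticipate a serious obstacle here, since all the needed facts about $A_8$, its glue vectors, and the uniqueness of $E_8$ have already been established.
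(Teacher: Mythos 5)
Your proof is correct and takes essentially the same route as the paper: pass to the primitive closure $\q\sigma(A_8)\cap L$, note that the only proper integral overlattice of $A_8$ is the one obtained by adjoining the glue vector $\lglue 3\rglue$, identify it as the even unimodular lattice $E_8$, and split it off as an orthogonal summand. You merely spell out the glue-group bookkeeping that the paper's one-line proof leaves implicit.
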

\begin{proof}
Suppose that $\sigma : A_8 \longrightarrow L$ is a representation such that $\sigma(A_8)$ is not primitive in $L$.  Then $\q\sigma(A_8)\cap L$ is a unimodular lattice which must be generated by $A_8$ and its glue vector $\lglue 3 \rglue$.  Since $Q(\lglue 3 \rglue) = 2$, this unimodular lattice must be even and hence isometric to $E_8$.
\end{proof}

\section{Additively indecomposable lattices} \label{additive}

We resume the discussion on additively indecomposable lattices in this section.

\begin{lem} \label{min}
If $L$ is an additively indecomposable integral lattice, then $\min(L^\#) > 1$.
\end{lem}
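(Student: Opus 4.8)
The plan is to prove the contrapositive: if $\min(L^\#)\le 1$, then $L$ is not additively indecomposable. Throughout I take $n:=\rank L\ge 2$; for $n=1$ the only additively indecomposable lattice is $\langle 1\rangle$, which is the obvious exception to the stated bound, so the content of the lemma is for $n\ge 2$. Fix a nonzero $\bx\in L^\#$ with $Q(\bx)=\min(L^\#)\le 1$. The idea is to use $\bx$ to build an isometric embedding of $L$ into an orthogonal sum of two integral lattices whose image projects nontrivially onto each summand. The governing fact is that, since $\bx\in L^\#$, the functional $\bv\mapsto B(\bx,\bv)$ is $\z$-valued on $L$; this integrality is exactly what keeps the auxiliary lattice I construct integral.

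Suppose first that $Q(\bx)<1$. Fix a basis $\bv_1,\ldots,\bv_n$ of $L$, let $G=(B(\bv_i,\bv_j))$ be its Gram matrix, and put $v=(B(\bx,\bv_1),\ldots,B(\bx,\bv_n))^{\mathrm t}\in\z^n$. I form the symmetric integer matrix $G':=G-vv^{\mathrm t}$, whose entries $B(\bv_i,\bv_j)-B(\bx,\bv_i)B(\bx,\bv_j)$ lie in $\z$ precisely because $\bx\in L^\#$. The crucial point is that $G'$ is positive definite: for $\bw=\sum w_i\bv_i$ one has $\bw^{\mathrm t}G'\bw=Q(\bw)-B(\bx,\bw)^2$, and Cauchy--Schwarz gives $B(\bx,\bw)^2\le Q(\bx)Q(\bw)$, whence $\bw^{\mathrm t}G'\bw\ge (1-Q(\bx))Q(\bw)>0$ for $\bw\ne 0$. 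Thus $G'$ is the Gram matrix of an integral lattice $M_1$ with some basis $\bu_1,\ldots,\bu_n$. Choosing $\be$ with $Q(\be)=1$ to span $\langle 1\rangle$, I set $\sigma(\bv_i)=\bu_i+B(\bx,\bv_i)\be$ and extend linearly; the identity $B(\sigma(\bv_i),\sigma(\bv_j))=G'_{ij}+B(\bx,\bv_i)B(\bx,\bv_j)=G_{ij}$ shows that $\sigma$ is an isometric embedding of $L$ into $M_1\perp\langle 1\rangle$. Its image lies in neither summand: not in $M_1$ because $B(\bx,\bv)\ne 0$ for some $\bv$ (as $\bx\ne 0$), and not in $\langle 1\rangle$ for rank reasons ($n\ge 2$). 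Hence $L$ is additively decomposable.

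The boundary case $Q(\bx)=1$ must be treated separately, since there $G'$ degenerates (Cauchy--Schwarz becomes an equality along $\bx$). Here I argue directly: the overlattice $L':=L+\z\bx$ is integral, because $B(\bx,L)\subseteq\z$ and $Q(\bx)=1\in\z$. As $\bx\in L'$ has norm $1$, the unimodular sublattice $\z\bx$ splits off orthogonally, so $L'=\z\bx\perp L''$ with $L''=L'\cap\bx^\perp$ integral. The inclusion $L\hookrightarrow \z\bx\perp L''$ is then a representation whose image meets both summands: it is not contained in $\z\bx$ by rank, and not in $L''$ since $\bx$ is not orthogonal to all of $L$. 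Again $L$ is additively decomposable, completing the contrapositive.

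I expect the main obstacle to be the positive-definiteness of $G'$, together with recognizing that the construction degenerates exactly at $Q(\bx)=1$ and forces the separate (and simpler) boundary argument via $L+\z\bx$. By contrast the integrality of $M_1$ is essentially free from $\bx\in L^\#$, and the ``image in neither summand'' checks are routine once one keeps $n\ge 2$ in view.
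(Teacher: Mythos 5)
Your proof is correct, but it takes a genuinely different route from the paper: the paper's entire proof is a citation to Plesken, namely that an additively indecomposable lattice is a ``block form'' and that, by \cite[Corollary (II.5)]{p}, being a block form is equivalent to $\min(L^\#)>1$. You instead prove the contrapositive from scratch: given $\bx\in L^\#$ with $Q(\bx)\le 1$, you build an explicit representation of $L$ into an orthogonal sum meeting both summands, via the rank-one perturbation $G-vv^{\mathrm t}$ of the Gram matrix in the case $Q(\bx)<1$ (with Cauchy--Schwarz giving positive definiteness and $\bx\in L^\#$ giving integrality), and via splitting the unimodular vector $\bx$ off the integral overlattice $L+\z\bx$ in the boundary case $Q(\bx)=1$. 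This is essentially the mechanism underlying Plesken's result, so what you gain is a self-contained, elementary argument where the paper has only a reference; what the citation buys the authors is brevity and the full strength of Plesken's equivalence (both directions), whereas you only need, and only prove, the one direction the lemma asserts. Your observation that $I_1$ is additively indecomposable with $\min(I_1^\#)=1$ is a fair catch --- the lemma as literally stated needs rank at least $2$ (or an exclusion of lattices representing $1$) --- but this is harmless for the paper, which applies the lemma only to lattices of rank at least $12$.
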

\begin{proof}
The quadratic form on an additively indecomposable integral lattice $L$ is necessarily a ``block form" (see \cite[Definition (II.1)]{p}) which, by \cite[Corollary (II.5)]{p},  is equivalent to the condition $\min(L^\#) > 1$.
\end{proof}

\begin{lem}\label{mindual}
Let $L$ be an integral lattice of squarefree discriminant $d$.  Then for any $\bv \in L^\#\setminus L$, $Q(\bv) \in \frac{1}{d}\z\setminus \z$.
\end{lem}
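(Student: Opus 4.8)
The plan is to study the finite abelian group $D:=L^\#/L$, whose order equals the discriminant $d$ of $L$ (the index $[L^\#:L]$ equals $|\det G|=d$ for any Gram matrix $G$ of $L$). Since $d$ is squarefree and a finite abelian group of squarefree order is cyclic, $D$ is cyclic of order $d$. The first thing to record is that the assignment $\bv+L\mapsto Q(\bv)\bmod\z$ gives a well-defined map $D\to\q/\z$: if $\bv'=\bv+\bw$ with $\bw\in L$, then $Q(\bv')=Q(\bv)+2B(\bv,\bw)+Q(\bw)$, and both $B(\bv,\bw)$ (because $\bv\in L^\#$ and $\bw\in L$) and $Q(\bw)$ (because $L$ is integral) lie in $\z$. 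I would then split the assertion into the two claims $Q(\bv)\in\frac1d\z$ and $Q(\bv)\notin\z$.

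The inclusion $Q(\bv)\in\frac1d\z$ is the easy half. Letting $e$ be the order of the coset $\bv+L$ in $D$, we have $e\bv\in L$ and $e\mid d$, so $eQ(\bv)=B(\bv,e\bv)\in\z$ (again since $\bv\in L^\#$ and $e\bv\in L$); hence $Q(\bv)\in\frac1e\z\subseteq\frac1d\z$. For the harder half, I fix a generator $\bar g=g+L$ of the cyclic group $D$ and write $Q(g)\equiv\frac cd\pmod\z$ with $c\in\z$, which is possible because $\bar g$ has order $d$. Every coset has the form $k\bar g$, and since $Q$ is quadratic, $Q(kg)\equiv k^2 Q(g)\equiv\frac{k^2c}{d}\pmod\z$. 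A vector $\bv\in L^\#\setminus L$ corresponds to a coset $k\bar g$ with $d\nmid k$, so the goal becomes showing $d\nmid k^2c$.

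The crux is the claim $\gcd(c,d)=1$, which I would extract from the nondegeneracy of the discriminant pairing $b(\bar\bu,\bar\bv):=B(\bu,\bv)\bmod\z$ on $D$: if $\bar\bu$ pairs trivially with everything, then $\bu\in(L^\#)^\#=L$, so $\bar\bu=0$. On the cyclic group $D=\langle\bar g\rangle$ one computes $b(\bar g,\bar g)=Q(g)\equiv\frac cd$, and the radical is generated by $k_0\bar g$ with $k_0=d/\gcd(c,d)$; nondegeneracy forces $k_0=d$, i.e. $\gcd(c,d)=1$. Granting this, $d\mid k^2c$ is equivalent to $d\mid k^2$, and because $d$ is squarefree, $d\mid k^2$ forces $d\mid k$. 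Thus $\frac{k^2c}{d}\in\z$ precisely when $d\mid k$, i.e. precisely when $\bv\in L$; for $\bv\in L^\#\setminus L$ we conclude $Q(\bv)\notin\z$, which finishes the argument.

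The main obstacle is exactly this anisotropy step: one must rule out an ``isotropic'' coset having $Q(\bv)\in\z$. This is where squarefreeness is indispensable, for the implication $d\mid k^2\Rightarrow d\mid k$ fails the moment $d$ has a square factor. For example, $L=\langle 4\rangle$ has $d=4$ and the class of $\tfrac12\be$ lies in $L^\#\setminus L$ yet satisfies $Q(\tfrac12\be)=1\in\z$, so no version of the statement survives without the squarefree hypothesis.
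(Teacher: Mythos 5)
Your proof is correct, but it takes a genuinely different and substantially longer route than the paper's. The easy half is identical in both: some multiple of $\bv$ lands in $L$, so pairing $\bv$ against it puts $Q(\bv)$ in $\frac1d\z$. For the harder exclusion $Q(\bv)\notin\z$, the paper's argument is two lines: if $Q(\bv)\in\z$ for some $\bv\in L^\#\setminus L$, then $L+\z[\bv]$ is an integral lattice properly containing $L$, and a proper integral overlattice of index $t>1$ would force $t^2$ to divide $d$ --- impossible for squarefree $d$ (this is exactly the remark, made earlier in the paper, that squarefree discriminant implies maximality). You instead work in the discriminant group $L^\#/L$: you use that it is cyclic because its order $d$ is squarefree, that $Q$ descends to a $\q/\z$-valued quadratic form on it, and that the discriminant pairing is nondegenerate because $(L^\#)^\#=L$, to get $\gcd(c,d)=1$; squarefreeness then enters through $d\mid k^2\Rightarrow d\mid k$. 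Every step of this checks out, and your machinery actually proves slightly more --- the exact denominator of $Q(\bv)$ equals the order of the coset $\bv+L$ --- but for the lemma as stated the paper's overlattice/maximality observation is the more economical argument, and it uses the squarefree hypothesis in a more transparent way (no square can divide $d$, so $L$ admits no proper integral overlattice). Your closing counterexample $L=\langle 4\rangle$ correctly shows the hypothesis cannot be dropped.
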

\begin{proof}
For any $\bv \in L^\#\setminus L$, $d\bv \in L$ and hence $B(\bv, d\bv) \in \z$.  So, $Q(\bv) \in \frac{1}{d}\z$.  If $Q(\bv) \in \z$, then $L + \z[\bv]$ is an integral lattice containing $L$ as a proper sublattice.  This is impossible as $d$ is squarefree.
\end{proof}

\begin{prop} \label{addindecomposable1}
The lattice $L_{12}: = E_7\, A_5[1\, 3]$ is an additively indecomposable integral lattice with discriminant $3$ and $\min(L_{12}^\#) = \frac{4}{3}$.  Any integral lattice of rank $13$ which represents $L_{12}$ is isometric to either $L_{12}\perp \langle a \rangle$ or $L_{12}\, (9a - 3)\left [\bv\, \frac{1}{3}\right]$, where $a$ is some suitable positive integer and $\bv$ is a suitable minimal vector of $L_{12}^\#$.
\end{prop}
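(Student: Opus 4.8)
The plan is to establish the four assertions in turn from the glue description $L_{12}=(E_7\perp A_5)+\z[\bx]$ with $\bx=\lglue 1\rglue+\lglue 3\rglue$, where $\lglue 1\rglue$ denotes the glue vector \eqref{gluee7} of $E_7$ and $\lglue 3\rglue$ the glue vector \eqref{gluean} of $A_5$. First I would record the elementary facts. Since $Q(\bx)=Q(\lglue 1\rglue)+Q(\lglue 3\rglue)=\tfrac32+\tfrac32=3\in\z$, the lattice $L_{12}$ is integral; and since $\bx$ has order $2$ modulo $E_7\perp A_5$, the index $[L_{12}:E_7\perp A_5]$ is $2$, so the discriminant of $L_{12}$ is $(2\cdot 6)/4=3$. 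Being of squarefree discriminant, $L_{12}$ is maximal.

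For the computation of $\min(L_{12}^\#)$ I would pass to the discriminant group $D=(E_7\perp A_5)^\#/(E_7\perp A_5)\cong\z/2\times\z/6$, which carries the discriminant bilinear form. The annihilator $H^\perp$ of $H:=\langle\bx\rangle$ has order $6$, and $L_{12}^\#/L_{12}=H^\perp/H\cong\z/3$, matching the discriminant. Its two nonzero cosets are represented by the $A_5$-glue vector $\lglue 2\rglue$ and by its translate $\lglue 1\rglue+\lglue 5\rglue$ (here $\lglue 1\rglue$ is for $E_7$ and $\lglue 2\rglue,\lglue 5\rglue$ for $A_5$). Because the glue vectors are the shortest vectors in their cosets and norms add over the orthogonal summands, the minimal norm in a nonzero coset is
\[
\min\Bigl(Q(\lglue 2\rglue),\,Q(\lglue 1\rglue)+Q(\lglue 5\rglue)\Bigr)=\min\Bigl(\tfrac43,\ \tfrac32+\tfrac56\Bigr)=\tfrac43 .
\]
Since $\tfrac43<2=\min(L_{12})$, this gives $\min(L_{12}^\#)=\tfrac43$, attained at $\bv:=\lglue 2\rglue$.

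The heart of the argument is additive indecomposability. Let $\sigma\colon L_{12}\to M_1\perp M_2$ be a representation into integral lattices; I identify $L_{12}$ with its image and let $\pi_1,\pi_2$ be the orthogonal projections onto $\q M_1,\q M_2$, so that $\pi_i(\bv)\in M_i$ for all $\bv\in L_{12}$. As $E_7$ is additively indecomposable I may assume $E_7\subseteq M_1$, whence $\pi_2(\lglue 1\rglue)=0$. The key claim is that $M_2:=\pi_2(L_{12})$, if nonzero, has $\min(M_2)\ge 2$: for $\bw\in M_2$ with $Q(\bw)=1$, the orthogonal projection $\bw'$ of $\bw$ onto $\q L_{12}$ lies in $L_{12}^\#$ and has $Q(\bw')\le 1$; since every nonzero vector of $L_{12}^\#$ has norm $\ge\tfrac43$, we get $\bw'=0$, i.e. $\bw\perp\q L_{12}$, contradicting $B(\bv,\bw)=Q(\bw)=1$ for any $\bv\in L_{12}$ with $\pi_2(\bv)=\bw$. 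Now $\pi_2(\bx)=\pi_2(\lglue 3\rglue)$ is an integral vector of $M_2$ of norm $\le Q(\lglue 3\rglue)=\tfrac32$, so by $\min(M_2)\ge 2$ its norm is $0$; thus $\bx\in M_1$ and $M_2=\pi_2(A_5)$. Finally each root $\br$ of $A_5$ has $Q(\pi_2(\br))\le 2$ and, by $\min(M_2)\ge 2$, lies in $\{0,2\}$, so no root splits; irreducibility of the root system $A_5$ then places all roots in a single summand, and they cannot lie in $M_2$ (that would force $\pi_2(\lglue 3\rglue)=\lglue 3\rglue\ne 0$). Hence $A_5\subseteq M_1$ and $L_{12}\subseteq M_1$.

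For the last assertion, let $L$ have rank $13$ and represent $L_{12}$. Maximality of $L_{12}$ makes the representation primitive, since the intersection of $L$ with the rational span of the image is an integral overlattice of $L_{12}$ in its own span, hence equals $L_{12}$. I then follow the proof of Lemma \ref{andn}: writing $L=(L_{12}\perp\z[\bz])+\z[\by'+\tfrac{\bz}{f}]$ with $\bz$ spanning the orthogonal complement of $L_{12}$ and $f$ the order of $\by'+L_{12}$ in $L_{12}^\#/L_{12}\cong\z/3$, we have $f\in\{1,3\}$. If $f=1$ then $\by'\in L_{12}$ and $L\cong L_{12}\perp\langle a\rangle$; if $f=3$, the two nonzero cosets are exchanged by $-\mathrm{id}$, so I may take $\by'=\bv$ of norm $\tfrac43$, and integrality of $Q(\bv+\tfrac{\bz}{3})=\tfrac43+\tfrac{Q(\bz)}{9}$ forces $Q(\bz)=9a-3$, i.e. $L\cong L_{12}\,(9a-3)[\bv\,\tfrac13]$. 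The one genuinely delicate point is the indecomposability step of the third paragraph—specifically tying $A_5$ to the summand containing $E_7$ through the dual-minimum bound $\min(L_{12}^\#)=\tfrac43$; everything else is bookkeeping modeled on Lemma \ref{andn}.
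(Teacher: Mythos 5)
Your proof is correct, but it reaches the two key structural facts by a genuinely different route than the paper. The paper obtains both the additive indecomposability of $L_{12}$ and the value $\min(L_{12}^\#)=\tfrac43$ by citation: Conway--Sloane's table of lattices of small determinant identifies $L_{12}$ as the unique indecomposable rank-$12$ lattice of discriminant $3$, this is matched with the additively indecomposable lattice of that rank and discriminant in Plesken's Example (III.3), and $\min(L_{12}^\#)=1+\tfrac13$ is read off from there. You instead prove everything from the glue construction: the dual minimum via the subgroup $H^\perp/H$ of the discriminant group of $E_7\perp A_5$ together with the shortest-in-coset property of glue vectors, and additive indecomposability by a projection argument --- using $\min(L_{12}^\#)=\tfrac43>1$ to force $\min(\pi_2(L_{12}))\ge 2$, then pushing $\lglue 3\rglue$ and the irreducible root system $A_5$ into the summand already containing $E_7$. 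Your indecomposability argument is close in spirit to the paper's Lemma \ref{add} (which in fact applies here, since $E_7$ is additively indecomposable, $A_5$ is an indecomposable root lattice, and $Q(\lglue 1\rglue)=\tfrac32\notin\z$), but it is sharper in that it also excludes the ``represented by a sum of squares'' alternative that Lemma \ref{add} leaves open, via the dual-minimum bound. What the paper's route buys is brevity; what yours buys is independence from the classification tables, at the cost of having to verify the coset and projection computations by hand (all of which check out: discriminant $12/4=3$, $Q(\lglue 2\rglue)=\tfrac43$ versus $Q(\lglue 1\rglue)+Q(\lglue 5\rglue)=\tfrac73$, and $Q(\pi_2(\lglue 3\rglue))\le\tfrac32<2$). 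The final classification of rank-$13$ overlattices --- primitivity from the squarefree discriminant, then the coset analysis of Lemma \ref{andn} with the two nonzero classes of $L_{12}^\#/L_{12}$ identified under $-\mathrm{id}$ --- is essentially identical to the paper's.
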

\begin{proof}
By \cite[Table 1]{cs1}, $L_{12}$ is the unique (up to isometry) indecomposable integral lattice of rank 12 and discriminant 3.  Since every additively indecomposable lattice must be indecomposable, the additively indecomposable integral lattice of rank 12 and discriminant 3 appeared in \cite[Example (III.3)]{p} (whose symbol is $1^{12-8}, 2^7; 6$) must be $L_{12}$.  We also know from \cite[Example (III.3)]{p} that $\min(L^\#) =  1 + \frac{1}{3} = \frac{4}{3}$.

Let $\bx_1$ be the glue vector $\lglue 1\rglue$ of $E_7$, and $\bx_2$ be the glue vector $\lglue 3\rglue$ of $A_5$.     Let $\bv$ be the glue vector $\lglue 2\rglue$ of $A_5$. Since $B(\bx_1 + \bx_2, \bv) = B(\bx_2, \bv) = 1$ and $Q(\bv) = \frac{4}{3}$, $\bv$ is a minimal vector of $L_{12}^\#$.

In addition, $\{0, \bv, 2\bv\}$ is a complete set of representatives of $L_{12}^\#/L_{12}$.  Since the discriminant of $L_{12}$ is 3, any representation of $L_{12}$ into an integral lattice must be primitive.  Then a straightforward calculation shows that any integral lattice which represents $L_{12}$ must be isometric to
$$L_{12} \perp \langle a \rangle, \quad L_{12}\, (9a - 3)\left[\bv\, \frac{1}{3}\right], \quad \mbox{ or } \quad L_{12}\, (9a - 3)\left[2\bv\, \frac{1}{3}\right]$$
where $a$ is a suitable positive integer.  Since $\bv \equiv -2\bv$ mod $L_{12}$, the last two lattices are isometric via the symmetry with respect to the orthogonal complement of $L_{12}$.
\end{proof}

\begin{prop} \label{addindecomposable2}
The lattice $L_{16}: = A_{11}\, A_5\left[2\,2\right]$ is an additively indecomposable integral  lattice with discriminant $2$ and $\min(L_{16}^\#) = \frac{3}{2}$.  Any integral lattice of rank $17$ which represents $L_{16}$ is isometric to either $L_{16}\perp \langle b\rangle$ or $L_{16}\, (4b-2)\left[\bw\, \frac{1}{2}\right]$, where $b$ is a suitable positive integer and $\bw$ is a suitable minimal vector of $L_{16}^\#$.
\end{prop}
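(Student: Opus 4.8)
The plan is to follow the template of the proof of Proposition \ref{addindecomposable1} step by step, replacing the rank-$12$, discriminant-$3$ data by the rank-$16$, discriminant-$2$ data of $L_{16}$. First I would record the relevant glue-vector norms for the two root components: from \eqref{gluean}, $Q(\lglue 2\rglue) = \tfrac{2\cdot 10}{12} = \tfrac53$ for $A_{11}$ and $Q(\lglue 2\rglue) = \tfrac{2\cdot 4}{6} = \tfrac43$ for $A_5$, so the adjoined vector $\bm g = \lglue 2\rglue + \lglue 2\rglue$ has $Q(\bm g) = 3 \in \z$; hence $L_{16}$ is integral. The glue vector $\lglue 2\rglue$ has order $6$ in $A_{11}^\#/A_{11} \cong \z/12\z$ and order $3$ in $A_5^\#/A_5 \cong \z/6\z$, so $\bm g$ has order $\mathrm{lcm}(6,3)=6$ in $(A_{11}\perp A_5)^\#/(A_{11}\perp A_5)$, giving $\mathrm{disc}(L_{16}) = \tfrac{12\cdot 6}{6^2} = 2$, as claimed.

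For additive indecomposability I would argue exactly as in Proposition \ref{addindecomposable1}: since any additively indecomposable lattice is indecomposable, it suffices to identify $L_{16}$ with the additively indecomposable lattice of rank $16$ and discriminant $2$ recorded in the tables of \cite{p}, the identification being forced by matching invariants (rank, discriminant, and the dual minimum computed below). This is the step I expect to be the main obstacle, since the remaining assertions are direct computations; making the identification airtight requires the classification of indecomposable lattices of this rank and discriminant, and one may alternatively deduce the additive indecomposability of $L_{16}$ from the general family of discriminant-$2$ additively indecomposable lattices constructed later in this section, of which $L_{16}$ is the member of rank $4(1+3)$.

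Next I would compute $L_{16}^\#$ and its minimum. Writing elements of $(A_{11}\perp A_5)^\#/(A_{11}\perp A_5) \cong \z/12\z \times \z/6\z$ as $(a,b)$ with respect to the generators $\lglue 1\rglue$, the bilinear pairing against $\bm g$ shows that the dual condition $B(\,\cdot\,,\bm g)\in\z$ becomes $a + 2b \equiv 0 \pmod 6$, so $L_{16}^\#/(A_{11}\perp A_5)$ has order $12$ and $L_{16}^\#/L_{16}$ has order $2$, consistent with $\mathrm{disc}(L_{16})=2$. The nontrivial coset $L_{16}^\#\setminus L_{16}$ meets the classes $(0,3),(2,5),(4,1),(6,3),(8,5),(10,1)$; by orthogonality the minimal norm in each class equals the sum $Q(\lglue a\rglue)+Q(\lglue b\rglue)$ of the shortest glue norms (the glue vectors being shortest in their cosets), and \eqref{gluean} gives the values $\tfrac32,\tfrac52,\tfrac72,\tfrac92,\tfrac72,\tfrac52$. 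Hence $\min(L_{16}^\#)=\tfrac32$, attained by $\bw := \lglue 3\rglue$ of $A_5$ (viewed in $L_{16}^\#$), with $Q(\bw)=\tfrac32$; Lemma \ref{mindual} confirms a priori that every vector of $L_{16}^\#\setminus L_{16}$ has norm in $\tfrac12\z\setminus\z$, and the computation shows none has norm $\tfrac12$.

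Finally, for the representation structure I would invoke maximality: since $\mathrm{disc}(L_{16})=2$ is squarefree, $L_{16}$ is maximal, so any representation of $L_{16}$ into an integral lattice is primitive. For a rank-$17$ integral lattice $L$ representing $L_{16}$, the decomposition used in the proof of Lemma \ref{andn} gives $L \cong (L_{16}\perp\z[\bz]) + \z[\bv' + \tfrac{\bz}{f}]$ with $\bv' \in L_{16}^\#$ a coset representative and $f$ the order of $\bv'+L_{16}$ in $L_{16}^\#/L_{16}\cong\z/2\z$, so $f\in\{1,2\}$. When $f=1$ the adjoined vector already lies in $L_{16}\perp\z[\bz]$, giving $L\cong L_{16}\perp\langle b\rangle$ with $b=Q(\bz)$. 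When $f=2$ I would replace $\bv'$ by the minimal representative $\bw$ of its coset (legitimate since they differ by an element of $L_{16}\subseteq L$); integrality of $Q(\bw+\tfrac{\bz}{2})=\tfrac32+\tfrac14 Q(\bz)$ forces $Q(\bz)\equiv 2\pmod 4$, i.e. $Q(\bz)=4b-2$ for some $b\geq 1$, whence $L\cong L_{16}\,(4b-2)\left[\bw\,\tfrac12\right]$, completing the proof.
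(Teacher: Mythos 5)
Your computations of the discriminant, of $\min(L_{16}^\#)=\tfrac32$ via the coset enumeration in $\z/12\z\times\z/6\z$, and your explicit write-up of the rank-$17$ classification (which the paper leaves to the reader) are all correct, and in fact more complete than the paper's own treatment of the dual minimum (the paper argues by contradiction using Lemma \ref{mindual} rather than enumerating cosets). The gap is exactly where you predicted it: the additive indecomposability of $L_{16}$. Your primary route --- matching $L_{16}$ against an entry in Plesken's tables by rank, discriminant and dual minimum --- does not go through as stated, because unlike the rank-$12$, discriminant-$3$ case there is \emph{no uniqueness} to exploit here: the paper explicitly notes (from \cite[Table 2]{cs1}) that the genus of $L_{16}$ contains a second indecomposable lattice, so ``indecomposable of rank $16$ and discriminant $2$'' does not pin down the isometry class, and you would additionally have to know the dual minimum of the competing lattice and verify that Plesken's list even contains a rank-$16$, discriminant-$2$ entry --- the paper cites \cite[Example (III.3)]{p} only for the rank-$12$ and rank-$14$ cases. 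Your fallback is also off: $L_{16}$ is not a member of the family $M_{4(k+3)}=M\,D_{4k-2}[\bu\,1]$ of \eqref{m4n+3}, which starts at rank $20$ (it requires $D_{4k-2}$ with $k\ge 2$).

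What the paper actually does is bypass classification entirely: it observes that $Q(\bx_1+\bx_2)=3$ (which you computed), so $L_{16}$ is generated by vectors of norm at most $3$ (the roots of $A_{11}\perp A_5$ together with the glue vector), and that $\min(L_{16}^\#)=\tfrac32>1$ (which you also computed, and which by \cite[Corollary (II.5)]{p} means $L_{16}$ is a block form); then \cite[Proposition (III.1)]{p} yields additive indecomposability directly. So you already hold both ingredients of the correct argument but never assemble them. Alternatively, Lemma \ref{add} \emph{does} apply directly to $L_{16}=A_{11}\,A_5[2\,2]$ (both components are indecomposable root lattices and $Q(\bx_1)=\tfrac53\notin\z$), reducing the problem to showing $L_{16}$ is not represented by a sum of squares --- which follows from $\min(L_{16}^\#)>1$ together with maximality --- but that deduction would also need to be spelled out rather than asserted.
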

\begin{proof}
By \cite[Table 2]{cs1}, $L_{16}$ is an indecomposable integral lattice of rank 16 and discriminant 2.    However, there is another lattice in its genus which is also indecomposable.  The sublattice of $L_{16}$ generated by its roots is $A_{11} \perp A_5$.    Let $\bw$ be the glue vector $\lglue 3\rglue$ of $A_5$.  It is straightforward to check that $\bw$ is in $L^\#$ and $Q(\bw) = \frac{3}{2}$.  If $\bw$ is not a minimal vector of $L^\#$, then either $L_{16}$ represents 1 or by Lemma \ref{mindual} $L_{16}^\#$ must have a vector $\by$ such that $Q(\by) = \frac{1}{2}$.   The first possibility is absurd since $L_{16}$ is indecomposable.  As for the second possibility, $2\by$ would be a root of $L_{16}$.  There must be another root $\bv$ in $A_{11}$ or $A_5$ such that $B(2\by, \bv) = 1$.  But this is impossible as $\by \in L_{16}^\#$.  Thus,  $\min(L^\#)$ must be equal to $\frac{3}{2}$.

Let $\bx_1$ be the glue vector $\lglue 2\rglue$ of $A_{11}$ and $\bx_2$ be the glue vector $\lglue 2\rglue$ of $A_5$.  Then $Q(\bx_1 + \bx_2) = 3$, implying that  $L_{16}$ is generated by vectors $\bx$ with $Q(\bx) \leq 3$.  By \cite[Proposition (III.1)]{p}, $L_{16}$ is additively indecomposable.  The rest of the proof is similar to that of Proposition \ref{addindecomposable1} and we leave it to the readers.
\end{proof}

Let $M = A_{13}\, 7\left[4\, \frac{1}{7}\right]$.  By \cite[Table 1]{cs1}, $M$ is the unique indecomposable integral lattice of rank 14 and discriminant 2.   So, it must be the additively indecomposable integral lattice of the same rank and discriminant in \cite[Example (III.3)]{p} (with symbol $1^{14-1}; 4$).  The minimum of $M^\#$ is $1 + 2^{-1} = \frac{3}{2}$.  Let $\bz$ be a vector which generates the orthogonal complement of $A_{13}$ in $M$.  It is easy to verify that $\bu: = \lglue 1\rglue + \frac{2}{7}\bz$ is in $M^\#$ and $Q(\bu) = \frac{3}{2}$.  So, $\bu$ is a minimal vector of $M^\#$ and $M^\# = M \cup (\bu + M)$.

For every $k \geq 2$, let
\begin{equation} \label{m4n+3}
M_{4(k+3)}: = M\, D_{4k-2}[\bu\, 1].
\end{equation}
It is easy to check that $M_{4(k+3)}$ is an integral lattice of rank $4(k+3)$ and discriminant 2.

\begin{lem} \label{add}
Let $L_1$ and $L_2$ be integral lattices which are either additively indecomposable or indecomposable root lattices.  Suppose that $L := L_1L_2[\bx_1\,\bx_2]$ is an integral lattice.  If either $Q(\bx_1)$ or $Q(\bx_2)$ is not an integer, then $L$ is either additively indecomposable or represented by a sum of squares.
\end{lem}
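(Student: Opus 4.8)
The plan is to run the whole argument through the dual-lattice criterion: for an integral lattice $K$ of rank $\ge 2$, $K$ is additively indecomposable if and only if $\min(K^\#)>1$ (Lemma~\ref{min} together with \cite[Corollary (II.5)]{p}). I first record that integrality of $L$ forces $Q(\bx_1)+Q(\bx_2)=Q(\bx_1+\bx_2)\in\z$, so the hypothesis that one of $Q(\bx_1),Q(\bx_2)$ is non-integral upgrades to the statement that \emph{both} are non-integral.

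Since $L_1\perp L_2$ has finite index in $L$, we have $L^\#\subseteq L_1^\#\perp L_2^\#$, and a vector $\bv=\bv_1+\bv_2$ (with $\bv_i\in L_i^\#$) lies in $L^\#$ precisely when $B(\bv_1,\bx_1)+B(\bv_2,\bx_2)\in\z$, a condition depending only on the classes $\bar\bv_i,\bar\bx_i$ in the discriminant form of $L_i$. Assume $L$ is not additively indecomposable, so some nonzero $\bv=\bv_1+\bv_2\in L^\#$ has $Q(\bv_1)+Q(\bv_2)\le 1$; the goal is to embed $L$ in a sum of squares. If $L_i$ is additively indecomposable (by Lemma~\ref{min}, $\min(L_i^\#)>1$; among indecomposable root lattices this is exactly $E_6,E_7,E_8$), then $Q(\bv_i)\le 1$ forces $\bv_i=0$. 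If $L_i\cong A_n$ or $D_n$, then a nonzero $\bv_i$ cannot lie in the trivial coset (where $Q(\bv_i)\ge 2$), so it lies in a nontrivial coset whose minimum is $\le 1$. The remaining indecomposable root lattice $I_1$ cannot occur here, since every vector of $I_1^\#$ has integral norm.

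The technical core is a discriminant-form computation. Using \eqref{gluean} and \eqref{gluedn}, I would record that for $A_n$ the cosets of minimum $\le 1$ are the $\lglue a\rglue$ with $a$ coprime to $n+1$ (for $n\ge 4$ only $a=1,n$; the cases $n\le 3$ are checked by hand), while for $D_n$ they are $\lglue 2\rglue$, joined by $\lglue 1\rglue,\lglue 3\rglue$ only when $n=4$. The key claim is then: whenever $\bar\bx_i$ has non-integral norm, every such short coset pairs \emph{non-trivially} with it in $\q/\z$. For $A_n$ this holds because $b(\lglue a\rglue,\lglue c\rglue)=-ac/(n+1)$ with $a$ prime to $n+1$, while non-integrality of the norm forces $n+1\nmid c$; for $D_n$ the pairing of $\lglue 2\rglue$ with $\lglue 1\rglue$ or $\lglue 3\rglue$ equals $\pm\tfrac12\ne 0$, and the exceptional case $n\equiv 0\pmod 4$ (where $\lglue 1\rglue,\lglue 3\rglue$ become short) admits no non-integral-norm coset at all. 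Granting this, if at most one factor contributes a nonzero $\bv_i$, the membership condition collapses to a single term $b_i(\bar\bv_i,\bar\bx_i)\equiv 0$, contradicting the claim (recall both $\bar\bx_i$ have non-integral norm); hence $\bv=0$ and $L$ is additively indecomposable.

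The only surviving possibility is that both factors are root lattices contributing nonzero short $\bv_i$; since the minima are $n/(n+1)\ge\tfrac12$ for $A_n$ and $\ge 1$ for $D_n$, the constraint $Q(\bv_1)+Q(\bv_2)\le 1$ forces $L_1\cong L_2\cong A_1$ with each $\bv_i$ the glue vector $\lglue 1\rglue$ of norm $\tfrac12$. In this single exceptional case $\bar\bx_i=\lglue 1\rglue$ as well, the vector $\bv=\bv_1+\bv_2$ has norm $1$ and in fact lies in $L$, so $L$ splits off a unit vector and is isometric to $\langle 1\rangle\perp\langle d\rangle$, which embeds in $I_{1+d}$ — a sum of squares, as required. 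I expect the uniform verification of the discriminant-form claim over all $A_n$ and $D_n$ (the coprimality bookkeeping for small $n$, and the role of $n\equiv 0\pmod 4$ for $D_n$) to be the main obstacle; once it is in hand, the branch analysis and the isolation of the single $A_1\perp A_1$ exception are routine.
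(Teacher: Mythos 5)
Your argument is built on the equivalence ``$K$ is additively indecomposable if and only if $\min(K^\#)>1$,'' and the direction you actually use --- if $L$ is not additively indecomposable then $L^\#$ contains a nonzero vector of norm $\le 1$ --- is false. Lemma~\ref{min} (via Plesken's Corollary (II.5)) gives only the implication ``additively indecomposable $\Rightarrow$ $\min(K^\#)>1$''; the converse fails already for $E_7\perp E_7$, which has $\min\bigl((E_7\perp E_7)^\#\bigr)=\tfrac32>1$ yet is decomposable, hence not additively indecomposable. Plesken's condition $\min(K^\#)>1$ characterizes \emph{block forms}, not additively indecomposable forms; that is precisely why the paper, in Proposition~\ref{addindecomposable2}, must invoke the extra hypothesis of \cite[Proposition (III.1)]{p} (generation by vectors of norm $\le 3$) rather than just computing $\min(L_{16}^\#)$. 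Since your glued lattice $L$ may have one factor $L_i$ additively indecomposable with large minimum, that supplementary criterion is not available either, so the entire reduction of the lemma to a discriminant-form computation on $L^\#$ collapses: showing $\min(L^\#)>1$ does not establish additive indecomposability.

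A second, smaller issue is that your endgame proves too much: you conclude that the sum-of-squares alternative occurs only for $A_1A_1[1\,1]\cong I_2$, whereas the statement is a genuine disjunction whose proof must handle an arbitrary representation $\sigma:L\to M_1\perp M_2$. The paper's proof works directly with such a $\sigma$: writing $M_j\cong I_{m_j}\perp M_j'$ with $\min(M_j')\ge 2$, each $\sigma(L_i)$ is forced into $I_{m_1+m_2}$, $M_1'$, or $M_2'$ (by additive indecomposability or by connectedness of the root system of $L_i$), and the non-integrality of $Q(\bx_1)$ or $Q(\bx_2)$ prevents the glue vector from splitting across two of these pieces; hence either both images lie in $I_{m_1+m_2}$ (sum of squares) or both lie in the same $M_j'$ (additive indecomposability). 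If you want to salvage your approach, you would need to replace the false converse of Lemma~\ref{min} with this kind of direct analysis of representations, at which point you have essentially reconstructed the paper's argument.
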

\begin{proof}
Suppose that $\sigma : L \longrightarrow M_1\perp M_2$ is a representation of $L$ into the orthogonal sum of two integral lattices $M_1$ and $M_2$.  For $j = 1, 2$, $M_i$ takes the form $I_{m_j} \perp M_j'$ with $\min(M_j') \geq 2$ if $M_j'$ is nonzero.  The hypothesis of the lemma implies that $\sigma(L_i)$ is contained in $M_1'$, $M_2'$, or $I_{m_1 + m_2}$ for $i = 1, 2$.

If both $\sigma(L_1)$ and $\sigma(L_2)$ are inside of $I_{m_1 + m_2}$,  then $\sigma(\bx_1 + \bx_2)$ must also be in $I_{m_1 + m_2}$ and hence $\sigma(L)$ is represented by a sum of squares.

Suppose that one of $\sigma(L_1)$ or $\sigma(L_2)$ is not contained in $I_{m_1 +m_2}$.  Without loss of generality, let us assume that $\sigma(L_1) \subseteq M_1'$.  Suppose that $\sigma(L_2)$ is contained in either $M_2'$ or $I_{m_1+m_2}$.  Since $\sigma(\bx_1 + \bx_2) \in (\q M_1' \perp \q N) \cap (M_1 \perp M_2) = M_1' \perp N$ with $N = M_2'$ or $I_{m_1+m_2}$,  therefore $\sigma(\bx_1) \in M_1'$ and $\sigma(\bx_2) \in N$ which is not possible because one of $Q(\bx_1)$ and $Q(\bx_2)$ is not an integer.  Therefore, $\sigma(L_2)$ is also inside of $M_1'$ which means that $\sigma(L) \subseteq M_1$ and $L$ is additively indecomposable.
\end{proof}

\begin{prop} \label{mn}
For every integer $k \geq 2$, the integral lattice $M_{4(k+3)}$ defined in \eqref{m4n+3} is additively indecomposable.  The minimum of $M_{4(k+3)}^\#\setminus M_{4(k+3)}$ is  $\frac{5}{2}$ but the minimum of $M_{4(k+3)}$ is $2$.
\end{prop}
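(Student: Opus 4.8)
Write $\Lambda := M \perp D_{4k-2}$ and let $\bw := \bu + \lglue 1\rglue$, where $\lglue 1\rglue$ is the glue vector of $D_{4k-2}$ in \eqref{gluedn}, so that $M_{4(k+3)} = \Lambda + \z[\bw]$. Since $2\bu \in M$ and $2\lglue 1\rglue \in D_{4k-2}$, the vector $\bw$ has order $2$ modulo $\Lambda$; hence $[M_{4(k+3)}:\Lambda] = 2$, and as $\Lambda$ has discriminant $2\cdot 4 = 8$ the lattice $M_{4(k+3)}$ has discriminant $2$ and $M_{4(k+3)}^\#/M_{4(k+3)}$ has order $2$. The plan is to treat the three assertions separately: additive indecomposability is formal, while the two minima are computed by working inside $\Lambda$ and inside $\Lambda^\# = M^\# \perp D_{4k-2}^\#$.

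For additive indecomposability I would apply Lemma \ref{add} with $L_1 = M$, which is additively indecomposable, and $L_2 = D_{4k-2}$, which is an indecomposable root lattice because $4k - 2 \ge 6$. The two glue vectors $\bu$ and $\lglue 1\rglue$ have norms $\frac 32$ and $\frac{2k-1}{2}$, neither an integer, so Lemma \ref{add} applies and shows that $M_{4(k+3)}$ is additively indecomposable or else represented by a sum of squares. To exclude the second possibility I would use $M \subseteq M_{4(k+3)}$: a representation $M_{4(k+3)} \to I_m$ restricts to one of $M$, and since $I_m \cong I_1 \perp \cdots \perp I_1$ and $M$ is additively indecomposable, the image of $M$ would have to lie in a single copy of $I_1$; this is impossible as $\rank M = 14 > 1$.

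For $\min(M_{4(k+3)})$ I would use $M_{4(k+3)} = \Lambda \cup (\bw + \Lambda)$. On $\Lambda$ the minimum is $2$, since neither $M$ (being indecomposable of discriminant $2$) nor $D_{4k-2}$ has a vector of norm $1$. On the coset $\bw + \Lambda$ the quadratic form splits orthogonally, with $M$-component in $\bu + M$ (minimum $Q(\bu) = \frac 32$, recorded before the proposition) and $D_{4k-2}$-component in $\lglue 1\rglue + D_{4k-2}$ (minimum $\frac{2k-1}{2}$), so the coset minimum is $k+1 \ge 3$. Hence $\min(M_{4(k+3)}) = 2$. For $\min(M_{4(k+3)}^\# \setminus M_{4(k+3)})$ I would first write $M_{4(k+3)}^\# = \{v \in \Lambda^\# : B(v, \bw) \in \z\}$ and parametrize $\Lambda^\#/\Lambda$ by the classes of $\bu, \lglue 1\rglue, \lglue 2\rglue$. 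A short computation of $B(\cdot, \bw)$ identifies the four cosets making up $M_{4(k+3)}^\#$, of which $\{0, \bw\}$ is $M_{4(k+3)}$; the complement $M_{4(k+3)}^\# \setminus M_{4(k+3)}$ then consists of the two cosets represented by $\bu + \lglue 2\rglue$ and by $\lglue 3\rglue$. By Lemma \ref{mindual} every vector there has norm in $\frac 12\z \setminus \z$. Minimizing each coset as before gives $Q(\bu) + \min(\lglue 2\rglue + D_{4k-2}) = \frac 32 + 1 = \frac 52$ for the first and $\frac{2k-1}{2} \ge \frac 52$ for the second, so the overall minimum is $\frac 52$.

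The main obstacle is the dual-lattice bookkeeping in the final step: correctly deciding which cosets of $\Lambda$ survive into $M_{4(k+3)}^\#$ and, among those, which fall outside $M_{4(k+3)}$, and then locating the shortest vector of each surviving coset. Everything needed for this is explicit — the splitting $\Lambda^\# = M^\# \perp D_{4k-2}^\#$, the fact that $M^\# = M \cup (\bu + M)$ with $\bu$ minimal of norm $\frac 32$, and the four glue vectors of $D_{4k-2}$ from \eqref{gluedn} with their coset minima $0, \frac{2k-1}{2}, 1, \frac{2k-1}{2}$ — so the computation is routine once the coset structure is laid out.
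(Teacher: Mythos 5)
Your argument follows the paper's proof in all essentials: additive indecomposability comes from Lemma \ref{add} together with the observation that $M_{4(k+3)}\supseteq M$ cannot land in a sum of squares, and the dual minimum is computed by locating the short vectors of $M_{4(k+3)}^\#\setminus M_{4(k+3)}$ inside $\Lambda^\# = M^\#\perp D_{4k-2}^\#$. Your packaging of the last step is tidier: you enumerate the $\Lambda$-cosets making up $M_{4(k+3)}^\#$ and minimize each, whereas the paper exhibits $\bu+\lglue 2\rglue$ of norm $\frac52$ and then rules out norm $\frac32$ by a case analysis on the components $\by\in M^\#$, $\bz\in D_{4k-2}^\#$; the content is the same. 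You also actually verify $\min(M_{4(k+3)})=2$, which the proposition asserts but the paper's proof never addresses, so that part is a small genuine improvement.

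However, your final inequality $\frac{2k-1}{2}\ge\frac52$ is false at $k=2$, and this exposes a real problem --- one shared by the paper's own proof, which at the corresponding point asserts that a vector of $D_{4k-2}$ or of $\lglue 3\rglue+D_{4k-2}$ cannot have norm $\frac32$. For $k=2$ the second component is $D_6$ and $Q(\lglue 3\rglue)=\frac64=\frac32$; since $B(\lglue 3\rglue,\lglue 1\rglue)=k-1\in\z$ and $B(\lglue 3\rglue,\bu)=0$, the vector $\lglue 3\rglue$ lies in $M_{20}^\#\setminus M_{20}$ and has norm $\frac32$, so $\min\bigl(M_{20}^\#\setminus M_{20}\bigr)=\frac32$, not $\frac52$. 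Your coset bookkeeping is correct and in fact makes the defect visible: the coset $\lglue 3\rglue+\Lambda$ has minimum $\frac{2k-1}{2}$, which is at least $\frac52$ only for $k\ge3$. So the statement (and both proofs) is fine for $k\ge3$ but fails for $k=2$; you should either restrict to $k\ge3$ or treat $M_{20}$ separately rather than assert the inequality across the stated range.
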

\begin{proof}
Since $M$ is additively indecomposable, it cannot be represented by any sum of squares.  Therefore, $M_{4(k+3)}$ cannot be represented by any sum of squares.  It must then be additively indecomposable by Lemma \ref{add}.  As a result, the minimum of $M_{4(k+3)}^\#$ is $> 1$ by Lemma \ref{min}.  It is easy to verify that the vector $\bu + \lglue 2\rglue$ is in $M_{4(k+3)}^\#\setminus M_{4(k+3)}$ and $Q(\bu + \lglue 2\rglue) = \frac{5}{2}$.
To complete the proof, it suffices to show that $M_{4(k+3)}^\#\setminus M_{4(k+3)}$ does not have any vector $\bx$ with $Q(\bx) = \frac{3}{2}$.   Assume on the contrary that there were indeed such a vector $\bx$.  Then, $\bx = \by + \bz$ where $\by \in M^\#$ and $\bz \in D_{4k-2}^\#$, and
$$B\left(\by + \bz, \bu + \lglue 1\rglue\right) \in \z.$$
Since $\min(M^\#) \geq \frac{3}{2}$, we have $\by = 0$ or $Q(\by) = \frac{3}{2}$.  If $\by = 0$, then $B(\bz, \lglue 1\rglue) \in \z$ which means that $\bz$ is in $D_{4k-2}$ or $\lglue 3\rglue + D_{4k-2}$.  But then $Q(\bz) \neq \frac{3}{2}$ which is not possible.  If $Q(\by) = \frac{3}{2}$, then $\bz = 0$ and $\bx = \by$.  We know that $\bx \not \in M$ because $Q(\bx) \not \in \z$.  Then, $\bx \in \bu + M$ and
$$B\left(\bx, \bu + \lglue 1\rglue\right) = B\left(\bx, \bu + \lglue 1\rglue\right) = B(\bx, \bu) \in Q(\bu) + \z \nsubseteq \z$$
which is impossible again.  This completes the proof.
\end{proof}

\section{Existence of $n$-exceptional sets of arbitrary sizes}

This section is devoted to proving the following theorem.

\begin{thm} \label{casemn}
For any positive integers $m$ and $n$, there exists an integral lattice whose $n$-exceptional set has size $m$.
\end{thm}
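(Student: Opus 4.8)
The plan is to promote the size-one result (the lemma just before Lemma~\ref{finite}) to size $m$ by the very same union-of-criterion-sets device, thereby reducing the theorem to a counting statement. Suppose rank $n$ contains $m$ pairwise non-isometric additively indecomposable maximal lattices $N_1,\dots,N_m$. Fix a $\Phi_n\setminus\{\cls(N_1),\dots,\cls(N_m)\}$-universality criterion set $\{\cls(K_1),\dots,\cls(K_\ell)\}$, each $K_j$ of rank $n$, and set $L=K_1\perp\cdots\perp K_\ell$. By the defining property of a criterion set, $L$ represents every class outside $\{\cls(N_1),\dots,\cls(N_m)\}$. Conversely, were $L$ to represent some $N_i$, additive indecomposability would (as in the size-one proof, grouping the summands one at a time) confine the image to a single $K_j$, and then maximality of $N_i$ together with $\rank K_j=n$ would force $N_i\cong K_j$, contradicting $\cls(N_i)\notin\{\cls(K_1),\dots,\cls(K_\ell)\}$. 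Hence $\mathcal E_n(L)=\{\cls(N_1),\dots,\cls(N_m)\}$ has size exactly $m$ (and, as a consistency check, $L$ is $(n-1)$-universal by Lemma~\ref{finite}). So everything comes down to producing $m$ additively indecomposable maximal lattices in each rank $n$.

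Section~4 supplies exactly this toolbox. Starting from the explicit additively indecomposable maximal lattices $E_7$, $L_{12}$, $M$, $L_{16}$ and the infinite family $M_{4(k+3)}$ of \eqref{m4n+3}, the gluing construction of Lemma~\ref{add} attaches an indecomposable root lattice along a glue vector of non-integral norm; since one factor remains additively indecomposable, the sum-of-squares alternative in Lemma~\ref{add} is excluded and the result is again additively indecomposable, while choosing the glue so that the discriminant stays squarefree keeps it maximal (as it does for the discriminant-$2$ lattices $M_{4(k+3)}$). Iterating this over the admissible root-lattice configurations of a fixed total rank produces many non-isometric such lattices of that rank, their number growing without bound with the corank; alternatively, for large $n$ one may use that indecomposable unimodular lattices (which are additively indecomposable and automatically maximal) both exist---by Bannai's theorem---and proliferate with $n$. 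In either case non-isometry is certified by an invariant already computed in Section~4: the discriminant, the root sublattice, or the minimum of the dual (Lemmas~\ref{min}, \ref{mindual} and Proposition~\ref{mn}).

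The genuine obstacle is uniformity across all $n$. By Ko's bound there is no additively indecomposable lattice of rank $>1$ in the ranks $2\le n\le 5$, so in this range the mechanism above is unavailable and the $m$ targets cannot be additively indecomposable; there one must instead obstruct representations by a finer invariant---controlling the minimum and the dual minimum exactly as in Proposition~\ref{mn}---so that a hypothetical representation of each target into the constructed $L$ is ruled out, while all remaining classes are still captured by unioning with a suitable criterion set. The rank-one case is the integer exceptional-set problem, accessible through the circle of ideas around the $15$- and $290$-theorems. I expect the main difficulty of the whole argument to be exactly this effective, simultaneous count: guaranteeing at least $m$ mutually non-representing targets in every rank at once, rather than only in the sparse ranks where explicit examples are tabulated, which should require running the parametrized gluing of Lemma~\ref{add} on the family $M_{4(k+3)}$ to cover an entire arithmetic progression of ranks and then filling the gaps by attaching root lattices of small corank.
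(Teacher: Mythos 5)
Your opening reduction is sound: if $N_1,\dots,N_m$ are pairwise non-isometric additively indecomposable maximal lattices of rank $n$ and $\{\cls(K_1),\dots,\cls(K_\ell)\}$ is a $\Phi_n\setminus\{\cls(N_1),\dots,\cls(N_m)\}$-universality criterion set, then $\mathcal E_n(K_1\perp\cdots\perp K_\ell)=\{\cls(N_1),\dots,\cls(N_m)\}$ exactly as in the paper's size-one lemma. The gap is that the input this reduction demands does not exist, and not merely in low rank. By Ko's theorem (which you cite) there are no additively indecomposable lattices of rank $2\le n\le 5$, and in rank $1$ only $\langle 1\rangle$ qualifies, so the method is empty there. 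Worse, it fails for \emph{every} fixed $n$ once $m$ is large: by Lemma \ref{min} an additively indecomposable lattice $L$ of rank $n$ satisfies $\min(L^\#)>1$, and Hermite's inequality applied to $L^\#$ gives $1<\min(L^\#)\le \gamma_n\,d(L)^{-1/n}$, hence $d(L)<\gamma_n^{\,n}$. Since there are only finitely many classes of integral lattices of given rank and bounded discriminant, each rank $n$ carries only finitely many additively indecomposable lattices, so no choice of constructions (iterating Lemma \ref{add} only increases the rank; Bannai's theorem gives one unimodular example per sufficiently large rank, and none in ranks $6$ and $7$) can supply $m$ of them for arbitrary $m$. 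The workarounds you sketch for the remaining cases are not carried out, and you correctly identify this as the main difficulty; it is in fact fatal to the strategy.

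The paper's proof is structured differently precisely to avoid this: the $m$ exceptional classes it produces are in general \emph{not} additively indecomposable. For each $n$ it fixes a single ``core'' lattice $C$ of rank $n-1$ or $n$ (namely $A_{n-1}$, $D_{n-1}$, $A_8$, $L_{12}$, $L_{16}$, or $M_{4(k+3)}$ according to $n$), takes a universal hull $\mathfrak N$ for the classes in $\Phi_n$ that do \emph{not} represent $C$, and classifies (Lemma \ref{andn}, Propositions \ref{addindecomposable1}--\ref{mn}) the rank-$n$ lattices representing $C$ as a one- or two-parameter family $P(k)$ glued from $C$ along a vector of non-integral norm. A representation of $P(k)$ into $\mathfrak N\perp P(k_0)$ is then forced into the $P(k_0)$ component, first because $C$ is indecomposable (or additively indecomposable), and second because the glue vector's non-integral norm prevents the orthogonal complement from escaping that component; this pins down $\mathcal E_n$ of the construction to an explicit initial segment of the family, whose length is tuned to be $m$ by the choice of $k_0$ (or a pair $k_0,k_1$). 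Only one indecomposable object per rank is needed, which is what makes the argument uniform in $n$.
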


Although the proof of this theorem is divided into several cases, there is a common thread to the arguments in all these different cases which we will explain in the following.

Let $\mathcal S$ be a subset of $\Phi_n$, and let $\{\cls(N_1), \ldots, \cls(N_\ell)\}$ be an $\mathcal S$-universality criterion set.  We call the lattice $\mathfrak N: = N_1\perp \cdots \perp N_\ell$ a {\em universal hull} for $\mathcal S$, which is an $\mathcal S$-universal integral lattice.  Denote by $\overline{\mathfrak N}$ the orthogonal sum of the indecomposable components of $\mathfrak N$ that are not isometric to $I_1$.
Note that the lattice $I_n \perp \oline{\mathfrak N}$ is also an $\mathcal S$-universal lattice.

Depending on $n$, we will consider a universal hull $\mathfrak N$ (or $I_n\perp \oline{\mathfrak N}$)  for some specific $\mathcal S$.  For example, we can take $\mathcal S$ to be $\Phi_n(m)$, the set of all classes in $\Phi_n$ whose minima are at least $m$, or the set of classes of lattices of certain rank which do not represent a particular root lattice or an additively indecomposable lattice.  Using the results from the previous sections, we determine the $n$-exceptional set of $\mathfrak N$ which is usually infinite.  However, due to the special nature of the chosen $\mathcal S$, $\mathcal E_n(\mathfrak N)$ is often a parametrized family $\{\cls(P(k)) : k \in \mathbb N\}$.   The final step of each case would be considering a lattice $G_{m,n}$ of the form $\mathfrak N\perp P(k_1)\perp \cdots \perp P(k_t)$ for some suitable positive integers $k_1, \ldots, k_t$ depending on $m$ and proving that the $n$-exceptional set of $\mathfrak N\perp P(k_1)\perp \cdots \perp P(k_t)$ has size $m$.

\subsection{$n = 1$ or $2$}

Let $\mathfrak L_{m,n}$ be a universal hull for $\Phi_n(m)$.   The minimum of $\mathfrak L_{m,n}$ must also be $m$ because all lattices in any $\Phi_n(m)$-universality criterion set have minima at least $m$ and $\langle m \rangle \perp \cdots \perp \langle m \rangle$ ($n$ copies of $\langle m \rangle$) must be represented by $\mathfrak L_{m,n}$.  Note that a universal hull for $\Phi_n(m)$ is also $\Phi_\ell(m)$-universal for $1 \leq \ell \leq n$.

In the case of $n = 1$, it is clear that $\mathcal E_1(\mathfrak L_{{m+1},1}) = \{1, \ldots, m\}$.

As for the case $n = 2$,  let
$$G_{m,2} := \langle 1, 2 \rangle \perp \begin{pmatrix}2&1\\1&m+1\end{pmatrix} \perp \mathfrak L_{3,2}.$$
By definition, all unary integral lattices $\langle d \rangle$ with $d \geq 3$ and all the binary integral lattices with minima $\geq 3$ are represented by $\mathfrak L_{3,2}$.  Thus,  $\mathcal E_2(G_{m,2})$ has exactly $m$ classes whose representatives are, respectively,
$$\begin{pmatrix} 2&1\\1&1\end{pmatrix} \cong \langle 1,1\rangle, \,\, \begin{pmatrix} 2&1\\1&2\end{pmatrix}, \dots, \begin{pmatrix} 2&1\\1&m\end{pmatrix}.$$

\subsection{$n = 3, 4$, and $5$}

For any $n$, let  $\mathfrak A_n$ be a universal hull for the set of classes of integral lattices of rank $n + 1$ that {\em do not} represent $A_n$.

We start with the case $n = 3$.  Any integral lattice of rank 2 that is not represented by $\mathfrak A_2$ must be isometric to one of the lattices in \eqref{a2}.  Let
$$G_{m,3}: = I_2 \perp \oline{\mathfrak A}_2 \perp A_2\, (9(m+1) -6)\left[1\, \frac{1}{3}\right].$$
For any $a \geq 2$, $\langle a, a, a \rangle$ does not represent $A_2$, hence it must be represented by $I_2 \perp \oline{\mathfrak A}_2$.  This shows that $A_2 \perp \langle a \rangle$ is represented by $G_{m,3}$.   It is also clear that $A_2\, (9b - 6)\left[1\,\frac{1}{3}\right]$ is represented by $\langle 1 \rangle \perp A_2\, (9(m+1) -6)\left[1\, \frac{1}{3}\right]$ for all $b \geq m + 1$.
Moreover, the image of any representation of $A_2$ into $G_{m,3}$ must sit inside the component $A_2\, (9(m+1) - 6)\left[1\, \frac{1}{3}\right]$ and hence $G_{m,3}$ cannot represent $A_2\, (9b - 6)\left[1\frac{1}{3}\right]$ for any positive integer $b \leq m$.   Therefore,  $\mathcal E_3(G_{m,3})$  has size $m$ and contains the $m$ classes of lattices whose representatives are $A_2\, (9a - 6)\left[1\, \frac{1}{3}\right]$, $1 \leq a \leq m$, respectively.

For $n = 4$, we first note that $\mathfrak L_{2,4}$ represents all integral lattices of rank at most 4 and minimum at least 2.  Thus, $\mathcal E_4(I_3\perp \mathfrak L_{2,4})$ contains only the single class $\cls(I_4)$.  So, we may assume that $m \geq 2$ in the following discussion.  The lattices that are not represented by $\mathfrak A_3$ must be isometric to one of the lattices in \eqref{a3}.

Since $I_3$ represents $A_3$, any quaternary integral lattice not representing $A_3$ must be of the form $I_k \perp N'$ with $k \leq 2$ and $\min(N') \geq 2$. Therefore,  $I_2 \perp \oline{\mathfrak A_3}$ also represents all quaternary integral lattices that do not represent $A_3$.  In particular, it represents $I_2 \perp \langle b, b \rangle$ for any $b \geq 2$ and so $I_3 \perp \langle b \rangle$ is represented by $I_3 \perp \oline{\mathfrak A_3}$.  Since $A_3 \perp \langle b \rangle$ and $A_3\, 4b\left[2\, \frac{1}{2}\right]$ are represented by $I_3 \perp \langle b \rangle$,  the lattices that are not represented by $I_3 \perp \oline{\mathfrak A_3}$ must be isometric to
$$I_4, \quad D_4, \quad A_3 \perp \langle 1 \rangle, \quad \mbox{ or } \quad A_3\, (16b - 12)\left[1\, \frac{1}{4}\right].$$

For any $m \geq 2$, let
$$G_{m,4}: = I_3 \perp \oline{\mathfrak A_3} \perp A_3\, (16m -12)\left[1\, \frac{1}{4}\right].$$
Then $\mathcal E_4(G_{2,4}) = \{\cls(I_4), \cls(D_4) \}$, and for any $m \geq 3$, $\mathcal E_4(G_{m, 4})$ has size $m$ and contains the $m$ classes whose representatives are, respectively,
$$I_4, \quad D_4, \quad \mbox{ or } \quad A_3\, (16b - 12)\left[1\, \frac{1}{4}\right], \quad 2 \leq b \leq m - 1.$$

For the final case of 5-exceptional sets, we consider the lattice
$$G(c,d) := I_4 \perp \oline{\mathfrak A_4} \perp M_c\perp K_d,$$
where $M_c$ and $K_d$ are from \eqref{a4}.  The integral lattices that are not represented by $G(c, d)$ are isometric to some of those lattices in \eqref{a4}.

For any $b \geq 1$, since $\langle b \rangle$ is represented by $I_4$, $A_4 \perp \langle b \rangle$  is represented by $G(c, d)$. It is also clear that for any $c' \geq c$ and $d' \geq d$,  $M_{c'}$ and $K_{d'}$ are represented by $I_4\perp M_c$ and $I_4\perp K_d$, respectively.  So, by Lemma \ref{a4rep}, the lattices that are not represented by $G(c,d)$ are isometric to
$$\left\{\begin{array}{lll}
M_s  \,(1\le s\le 4d-1), & K_t \, (1\le t \le d-1) & \text{if $d\le \frac c4$}, \\
M_s  \,(1\le s\le c-1), & K_t \, (1\le t \le d-1) &\text{if $\frac c4<d<4c-3$}, \\
M_s  \, (1\le s \le c-1),& K_t \, (1\le t\le 4c-2) &\text{if $4c-3\le d$}.\\
\end{array}\right.$$
In particular, if $m=5k+u$ for some positive integer $k$ and an integer $u$ such that $0\le u\le 4$, we have
$$\mathcal E_5(G(4k,k+u+2))=\left\{ \cls(M_s), \cls(K_t) : 1\le s\le 4k-1, 1\le t \le k+u+1\right\}$$
so that  $\left\vert \mathcal E_5(G(4k,k+u+2)) \right\vert = 5k+u=m$.  For $1 \leq m \leq 4$, it is easy to see that $\mathcal E_5(G(2,1)) = \{\cls(M_1)\}$ and
$$\mathcal E_5(G(2,m))=\{\cls(M_1), \cls(K_1),\ldots, \cls(K_{m-1})\} \mbox{ for $2 \le m\le 4$}.$$

\subsection{$n \geq 6$ and $n  \not \equiv 1$ mod $4$}

By Lemma \ref{andn}, if an integral lattice $L$ of rank $n$ primitively represents $D_{n-1}$, then $L$ must be isometric to
\begin{equation} \label{dnp}
D_{n-1}\perp \langle a \rangle, \quad D_{n-1}\, (16a - 4n + 4)\left[1\, \frac{1}{4}\right], \quad \mbox{ or } \quad D_{n-1}\, 4a\left[2\, \frac{1}{2}\right]
\end{equation}
for some suitable positive integer $a$.   However, if $L$ represents $D_{n-1}$ imprimitively, then $L$ must be isometric to
\begin{equation} \label{dn}
I_{n-1} \perp \langle a \rangle \quad \mbox{ or } \quad D_{n-1}[ 1]\perp \langle a \rangle \quad (\mbox{only if $n \equiv 1$ mod 4})
\end{equation}
for some suitable positive integer $a$.  Note that $D_{n-1} \perp \langle a \rangle$ is represented by $I_{n-1}\perp \langle a \rangle$ and $D_{n-1}[ 1]\perp \langle a \rangle$, and that $D_{n-1}\, 4a \left[2\, \frac{1}{2}\right]$ is represented by $I_{n-1}\perp \langle a \rangle$.

From now on, $n$ is assumed to be $\not \not \equiv 1$ mod 4.  Let $\mathfrak D_{n-1}$ be a universal hull of the set of classes of integral lattices of rank $n$ which {\em do not} represent $D_{n-1}$.  An integral lattice of rank $n$ that is not represented by $I_n\perp \oline{\mathfrak D}_{n-1}$ must be isometric to one of those in \eqref{dnp} or \eqref{dn}.  It is clear that $I_n\perp \oline{\mathfrak D}_{n-1}$ represents $I_{n}$.

Since $I_{n-2} \perp \overline{\mathfrak D}_{n-1}$ represents all integral lattices of rank $n$ that do not represent $D_{n-1}$, $I_{n-2}\perp \langle a, a \rangle$ is represented by $I_{n-2} \perp \oline{\mathfrak D}_{n-1}$ for any $a \geq 2$.  This implies that $\langle a, a \rangle$ is represented by $\oline{\mathfrak D}_{n-1}$.  Therefore,  $I_n\perp \oline{\mathfrak D}_{n-1}$ also represents $I_{n-1}\perp \langle a \rangle$ for any $a \geq 2$.

We now claim that for any $a > \frac{n-1}{4}$, $D_{n-1}\, (16a-4n + 4)\left[1\, \frac{1}{4}\right]$ is not represented by $I_n\perp \oline{\mathfrak D}_{n-1}$.  Suppose that $\sigma: D_{n-1}\, (16a-4n + 4)\left[1\, \frac{1}{4}\right] \longrightarrow I_n\perp \oline{\mathfrak D}_{n-1}$ is a representation.  Since $D_{n-1}$ is an indecomposable root lattice, $\sigma(D_{n-1})$ must be inside of $I_{n}$.  If $\sigma(D_{n-1})$ is a primitive sublattice of $I_{n}$, then $I_{n}$ is isometric to one of the lattices in \eqref{dnp} which is not possible.  Therefore, $\q\,\sigma(D_{n-1}) \cap I_{n}$ is a sublattice of $I_{n}$ which properly contains $\sigma(D_{n-1})$.  By considering the discriminants, we see that this sublattice of $I_{n}$ must be isometric to $I_{n-1}$.  So, there is a vector $\by \in \q\sigma(D_{n-1}) \cap I_n$ such that $\sigma^{-1}(\by)$ is the glue vector $\lglue 2\rglue$ of $D_{n-1}$.  Let $\bz$ be a vector which generates the orthogonal complement of $D_{n-1}$ in $D_{n-1}\, (16a-4n + 4)\left[1\, \frac{1}{4}\right]$.  Then $\sigma(\lglue 1\rglue + \frac{\bz}{4})$ is in $I_n\perp \oline{\mathfrak D}_{n-1}$ and
$$\frac{1}{2} = B(\lglue 2\rglue, \lglue 1\rglue) = B\left(\lglue 2\rglue, \lglue 1\rglue + \frac{z}{4}\right) = B\left(\by, \sigma\left(\lglue 1\rglue  + \frac{\bz}{4}\right)\right) \in \z,$$
which is absurd.

So, the integral lattices that are not represented by $I_n\perp \oline{\mathfrak D}_{n-1}$ are precisely those isometric to $D_{n-1}\, (16a - 4n + 4)\left[1\, \frac14\right]$ for all $a > \frac{n-1}{4}$.  Let
$$G_{m,n}: = I_n\perp \oline{\mathfrak D}_{n-1} \perp D_{n-1}\, \left(16\left(\left\lfloor \frac{n-1}{4} \right\rfloor + m + 1\right) - 4n + 4 \right)\left[1\, \frac{1}{4}\right].$$
Then $\mathcal E_n(G_{m,n})$ has size $m$ and contains the classes whose representatives are $D_{n-1}\, (16a - 4n + 4)\left[1\, \frac{1}{4}\right]$, $\lfloor \frac{n-1}{4}\rfloor + 1 \leq a \leq \lfloor \frac{n-1}{4} \rfloor + m$.

\subsection{$n = 9$}

By Lemma \ref{andn} and Lemma \ref{a8rep}, any integral lattice of rank 9 which represents $A_8$  must be isometric one of the following lattices:
$$E_8\perp \langle a \rangle, \quad A_8 \perp \langle a \rangle, \quad \mbox{ and }\quad   A(k,i) := A_8\, 9(9k + i^2)\left[i\, \frac{1}{9}\right],$$
where $a \geq 1$, $1 \leq i \leq 4$, and $k$ is an integer such that $9k + i^2 \geq 1$ (which means that $k \geq 0$, except that $k$ could be $-1$ when $i = 4$).

Note that $A(0,1) \cong I_9$, $A(1,1) \cong A_9$, and $A(0, 2) = D_9$.   Moreover, $A(k,3)$ is represented by $E_8 \perp \langle k + 1 \rangle$ because adjoining the glue vector $\lglue 3 \rglue$ to $A_8$ generates $E_8$.

Let $\overline{\mathfrak A}_8$ be a universal hull of the set of classes of integral lattices of rank 9 that do not represent $A_8$, and let
$$H: = I_8 \perp \oline{\mathfrak A}_8 \perp E_8.$$
It is easy to see that $H$ represents $E_8 \perp \langle a \rangle$ and $A_8 \perp \langle a \rangle$ for any $a \geq 1$.  This means that $H$ also represents $A(k,3)$ for all possible $k$.

We claim that $H$ does not represent $A(k,i)$ for $i = 1, 2$, or 4.  Assume on the contrary that there exists a representation $\sigma: A(k,i) \longrightarrow H$.  Then $\sigma(A_8) \subseteq E_8$ because $A_8$ is an indecomposable root lattice not represented by $I_8 \perp \oline{\mathfrak A}_8$.  If $\bz$ is a vector which generates the orthogonal complement of $A_8$ in $A(k, i)$, then $\sigma(\lglue i \rglue + \frac{\bz}{9}) \in H = E_8 \perp E_8^\perp$ and hence $\sigma(\lglue i \rglue) \in E_8$. This is impossible since $Q(\lglue i \rglue) \not \in \z$ for $i = 1, 2$, or 4.

We first treat the case $m \geq 21$.  Suppose that $m = 21q + r$, where $q \geq 1$ and $0 \leq r \leq 20$.  Let
$$G_{m,9}: = A(q +1, 1) \perp A(16q +r - 6, 4) \perp H.$$

It is clear that $A(q + 1, 1) \perp I_8$, hence $G_{m,9}$ as well,  represents $A(k, 1)$ for all $k \geq q + 1$.  By the same token,
$G_{m,9}$ represents $A(k', 4)$ for all $k ' \geq 16q + r - 6$.  Since $A(k, 1)$ represents $A(4k, 2)$ by Lemma \ref{anrep}, therefore $G_{m,9}$ represents $A(\tilde{k}, 2)$ for all $\tilde{k} \geq 4q + 4$.

Suppose that, for some $k \geq 0$,  $\sigma: A(k, 1) \longrightarrow G_{m,9}$ is a representation.  Then $\sigma(A_8)$ must be inside of either $A(q +1, 1)$ or $A(16q + r - 6, 4)$.  Let $\bz$ be a vector which generates the orthogonal complement of $A_8$ in $A(k, 1)$.  Assume that $\sigma(A_8) \subseteq A(q + 1, 1)$.  Since $Q(\lglue 1 \rglue) \not \in \z$, $\sigma(\bz)$, which is in the orthogonal complement of $A(q + 1, 1)$ in $G_{m,9}$, cannot be inside of $A(16q + r - 6, 4) \perp H$.  Thus, $k$ must be at least $q + 1$ in this case.  If $\sigma(A_8) \subseteq A(16q + r - 6, 4)$, then a similar reasoning together with Lemma \ref{anrep} show that $k \geq 4(16q + r - 6) + 7 \geq q + 1$.  Therefore, $k$ must be at least $q + 1$.  Using the same line of argument one can show that the following integral lattices are representatives of the classes in $\mathcal E_9(G_{m,9})$:
$$\left\{
\begin{array}{ll}
A(k, 1) & 0 \leq k \leq q,\\
A(\tilde{k}, 2) & 0 \leq \tilde{k} \leq 4q + 3,\\
A(k', 4) & -1 \leq k' \leq 16q + r - 7.
\end{array} \right.$$
Therefore, $\mathcal E_9(G_{m,9})$ contains exactly
$$(q + 1) + (4q + 4) + (16q + r - 5) = 21q + r = m$$
classes.  This completes the proof for the case $m \geq 21$.

For $20 \geq m \geq 6$, we consider
$$G_{m,9}: = A(1,1) \perp A(m - 6, 4) \perp G$$
whose $9$-exceptional set is
$$\left\{ \cls(A(0,1)), \cls(A(\tilde{k}, 2)) \, (0 \leq \tilde{k} \leq 3), \cls(A(k', 4)) \, (-1 \leq k' \leq m - 7)\right\}.$$
For $1 \leq  m \leq 5$, let
$$G_{m,9}: = A(1,1) \perp A(m-1, 2) \perp A(-1, 4)\perp G$$
whose $9$-exceptional set is
$$\left\{ \cls(A(0,1)), \cls(A(\tilde{k}, 2))\, (0 \leq \tilde{k} \leq m-2) \right\}.$$

In all cases, $\left \vert \mathcal E_9(G_{m,9}) \right \vert = m$.

\subsection{$n \geq 13$ and $n \equiv 1$ mod $4$}

The proof will be divided into three cases: $n = 13$, $n = 17$, and $n \geq 21$.

Let us consider the case $n = 13$ first.  Recall that the integral lattice $L_{12} = E_7\, A_5[1\, 3]$ from Proposition \ref{addindecomposable1} is additively indecomposable.  Let $\mathfrak N$ be a universal hull of  the set of classes of integral lattices of rank 13 that do not represent $L_{12}$, and let
$$G_{m,13}: = L_{12}\, (9m + 6)\left[\bv\, \frac{1}{3}\right] \perp \mathfrak N,$$
where $\bv$ is the minimal vector of $L_{12}^\#$ given in Proposition \ref{addindecomposable1}.

For any integer $a \geq 1$, the lattice $\langle a \rangle \perp \cdots \perp \langle a \rangle$ of rank 13 does not represent $L_{12}$ because $L_{12}$ is additively indecomposable.  Therefore, it must be represented by $\mathfrak N$ and hence $L_{12} \perp \langle a \rangle$ is represented by $G_{m,13}$.  By a similar argument, for all $k \geq m$ the lattices $L_{12}\, (9k + 6) \left[\bv\, \frac{1}{3} \right]$ are represented by $G_{m,13}$.

Now, suppose that $\sigma: L_{12}\, (9k + 6)\left[\bv\, \frac{1}{3}\right] \longrightarrow G_{m,13}$ is a representation.  Since $L_{12}$ is additively indecomposable, $\sigma$ must send $L_{12}$ into the orthogonal summand $L_{12}\, (9m + 6)\left[\bv\, \frac{1}{3}\right]$ of $G_{m,13}$.  Moreover, since $Q(\bv) = \frac{4}{3} \not \in \z$,  the orthogonal complement of $L_{12}$ in $L_{12}\, (9k + 6)\left[\bv\, \frac{1}{3}\right]$ cannot be sent by $\sigma$ into $\mathfrak N$.  So, $k$ must be at least $m$ and by Proposition \ref{addindecomposable1}, the $13$-exceptional set of $G_{m,13}$ contains exactly $m$ classes whose representatives are
$$L_{12}\, 6 \left[\bv\, \frac{1}{3}\right], \ldots, L_{12}\, (9m - 3)\left[\bv\, \frac{1}{3} \right].$$
This proves the case $n = 13$.  The cases $m = 17$ and $m \geq 21$ can be proved by the same argument using instead Proposition \ref{addindecomposable2} and Proposition \ref{mn}, respectively.  We leave the details to the readers.

\end{document}